
\NeedsTeXFormat{LaTeX2e}

\documentclass{lms}
\usepackage{amsbsy}
\usepackage{amsmath}
\usepackage{amssymb}

\newtheorem{theorem}{Theorem}[section] 
\newtheorem{lemma}[theorem]{Lemma}     
\newtheorem{corollary}[theorem]{Corollary}
\newtheorem{proposition}[theorem]{Proposition}

\newnumbered{assertion}{Assertion}    
\newnumbered{conjecture}{Conjecture}  
\newnumbered{definition}{Definition}
\newnumbered{hypothesis}{Hypothesis}
\newnumbered{remark}{Remark}
\newnumbered{note}{Note}
\newnumbered{observation}{Observation}
\newnumbered{problem}{Problem}
\newnumbered{question}{Question}
\newnumbered{algorithm}{Algorithm}
\newnumbered{example}{Example}
\newunnumbered{notation}{Notation} 



\title[Entire holomorphic curves on a Fermat surface]
 {Entire holomorphic curves on a Fermat surface of low degree} 

\author{Tuen-Wai Ng and Sai-Kee Yeung}

\dedication{Dedicated to Professor Walter K. Hayman on the occasion of his 90th birthday}

\classno{11B83 (primary), 30D05 (secondary)}

\extraline{The first author was partially supported by the RGC grant 17301115.  The second author was partially supported by a grant from the National Science Foundation}

\begin{document}
\maketitle

\begin{abstract}
The purpose of the paper is to study some problems raised by Hayman and Gundersen about the existence of non-trivial entire and meromorphic solutions for the Fermat type functional equation $f^n+g^n+h^n=1$. Hayman showed that no non-trivial meromorphic solutions and entire solutions exist when $n \ge 9$ and $n \ge 7$ respectively. By considering the entire holomorphic curves on the Fermat surface defined by $X^n+Y^n+Z^n=W^n$ on the complex projective space $\mathbb{P}^3$ and applying the method of jet differentials, we show that no non-trivial meromorphic solutions and entire solutions exist when $n \ge 8$ and $n \ge 6$ respectively.   In particular, this completes the investigation of non-trivial entire solutions for all
$n$ and respectively, meromorphic solutions for all cases except for $n=7$. Finally, for the generalized Fermat type functional equation $f^n+g^m+h^l=1$, we will also prove the non-existence of 
non-trivial meromorphic solutions when $1/n+1/m+1/l \le 3/8$, giving the strongest result obtained so far.   

\end{abstract}


\section{Introduction} 
\label{intro}

\noindent 

One of the most famous problems in number theory is the Fermat's Last Theorem which says that there is no natural numbers $x,y$ and $z$ such 
\begin{equation} \label{eqn:1} 
x^n+y^n=z^n
\end{equation}
for any natural number $n$ greater than $2$. The problem was eventually solved by Andrew Wiles, and the complete proof was published in 1995. 

The corresponding problem in one complex variable function theory is whether the equation (\ref{eqn:1}) has entire function solutions. This is equivalent to asking if the following functional equation has non-constant meromorphic solutions $f$ and $g$ on the complex plane $\mathbb{C}$:
\begin{equation} \label{eqn:2} 
f^n+g^n = 1
\end{equation}

It was proved by Iyer \cite{Iyer39} in 1939 (see also \cite{Gross66b}) that (\ref{eqn:2}) has no non-constant entire solutions when $n>2$  and when $n=2$, all entire solutions are of the form $f(z)=\cos(\alpha(z))$ and $g=\sin(\alpha(z))$, where $\alpha$ is a non-constant entire function. Gross \cite{Gross66a} showed in 1966 that (\ref{eqn:2}) has no non-constant meromorphic solutions when $n>3$  and when $n=2$, all the meromorphic solutions are of the form 
$$f(z)=\frac{2\beta(z)}{1+\beta(z)^2},\quad  g(z)=\frac{1-\beta(z)^2}{1+\beta(z)^2},$$
where $\beta$ is a meromorphic function.
For $n=3$, Baker \cite{Baker66} showed that all meromorphic solutions of (\ref{eqn:2}) are of the form $f(z)=F(\alpha(z))$ and $g=cG(\alpha(z))$ where $\alpha$ is an entire function, $F$ and $G$ are the elliptic functions $\frac{1+3^{-1/2}\wp'(z)}{2\wp(z)}$ and $\frac{1-3^{-1/2}\wp'(z)}{2\wp(z)}$ respectively. Here $c$ is a cubic root of unity and $\wp$ is the Weierstrass $\wp$ function.

It is then natural to ask what happens to the three term Fermat-type functional equation
\begin{equation} \label{eqn:3} 
f^n+g^n+h^n = 1
\end{equation}

Given any non-constant meromorphic function $f$, if we let $g=\omega_1f$ and $h=\omega_2$ where $\omega_1, \omega_2 \in \mathbb{C}$ such that $\omega_1^n=-1$ and $\omega_2^n=1$, then we get the trivial solution $(f,g,h)$ to (\ref{eqn:3}).
So by {\it non-trivial solutions} to (\ref{eqn:3}), we will mean solutions which are not of the form $(f(t),\omega_1f(t), \omega_2)$ or by permutation of the indices, where $\omega_1, \omega_2 \in \mathbb{C}$ such that $\omega_1^n=-1$ and $\omega_2^n=1$.\\


It was proved by Hayman \cite{Ha85} in 1985 that there is no non-trivial entire solutions to (\ref{eqn:3}) when $n\geqslant 7$ and there is no non-trivial meromorphic solutions to (\ref{eqn:3}) when $n\geqslant 9$.  For $n\leqslant 4$, Hayman \cite{Ha85} also showed  that there exist some meromorphic solutions for (\ref{eqn:3}) (actually Toda \cite{Toda71} also proved a more general results for the entire case in 1971 and Fujimoto proved the meromorphic case for meromorphic maps on $\mathbb{C}^k$ in \cite{Fujimoto74}). Hayman's proofs are based on Cartan's theory of holomorphic curves in projective spaces \cite{Cartan33}, which is a generalization of the value distribution theory of Nevanlinna. See \cite{GuHa04}, \cite{K98}and \cite{Lang87} for an introduction to Cartan's theory and \cite{AH14} for an attempt to sharpen Cartan's theory. In 2002, Ishizaki \cite{I02} gave a different proof of Hayman's results based on the classical Nevanlinna theory and he also pointed out that $f,g$ and $h$ must satisfy certain non-linear differential equation. \\

In 1998,  Gundersen \cite{Gu98} was able to construct meromorphic (elliptic) solutions for $n=6$ by expressing certain binary form as sum of powers of linear form (see also \cite{Toh11} for a detailed explanation of Gundersen's construction). Then in 2001, Gundersen \cite{Gu01} again constructed meromorphic solutions for the case $n=5$ using a result on the unique range sets of meromorphic functions.  
Examples of entire solutions also exist for $n \le 5$. They are given as follows where $\alpha$ is a non-constant entire function:\\

\noindent
{\bf Case n = 1}. $f, g$ non-constant entire,  $h = -f -g + 1$.\\
\noindent
{\bf Case n = 2}. $f = \dfrac {\alpha^2 - 2}{\sqrt{3}}, \,  g = \dfrac {(\alpha^2 + 1)i}{\sqrt{3}} , \, h = \sqrt{2} \alpha$\\
\noindent
{\bf Case n = 3}. Lehmer's example {\cite{Lehmer56}: $f = 9 \alpha^4, \,  g = - 9 \alpha^4 + 3 \alpha , \,  h = - 9 \alpha^3 + 1$\\
\noindent
{\bf Case n = 4}. Gross's example \cite{Gross66a}: 
$$
\begin{aligned}
f
&= 2^{1/4} (\sin^2 \alpha - \cos^2 \alpha  + i \sin \alpha  \cos \alpha), \\
g 
&= (- 1)^{1/4} (2 i \sin \alpha \cos \alpha + \sin^2 \alpha), \\
h 
& = (-1)^{1/4} (2 i \sin \alpha \cos \alpha - \cos^2 \alpha). 
\end{aligned}
$$

or Green's example \cite{Green75}: 

$$ f = 8^{-1/4} (e^{3 \alpha} + e^{-\alpha}), \,g = (-8)^{-1/4} (e^{3 \alpha} - e^{- \alpha}), \, h = (-1)^{1/4} e^{2\alpha}.$$

\noindent
{\bf Case n = 5}. Gundersen and Tohge's example \cite{GuToh04}: 
$$
\begin{aligned}
f
&= \frac{1}{3}[(2-\sqrt{6})e^{\alpha}+1+(2+\sqrt{6})e^{-\alpha}], \\
g 
&= \frac{1}{6}[\{(\sqrt{6}-2)+(3\sqrt{2}-2\sqrt{3})i\}e^{\alpha} + 2-\{(\sqrt{6}+2)-(3\sqrt{2}+2\sqrt{3}i\}e^{-\alpha}], \\
h 
& = \frac{1}{6}[\{(\sqrt{6}-2)+(2\sqrt{3}-3\sqrt{2})i\}e^{\alpha}+2-\{(\sqrt{6}+2)+(3\sqrt{2}+2\sqrt{3})i\}e^{-\alpha}]. 
\end{aligned}
$$

\noindent
Therefore, for the three term Fermat-type equation (\ref{eqn:3}), the remaining open problems are:\\

\noindent
{\bf Problem A:} Whether there exist non-trivial entire solutions of (\ref{eqn:3}) when $n=6$ ?\\

\noindent
{\bf Problem B:} Whether there exist non-trivial meromorphic (non-entire) solutions of (\ref{eqn:3}) when $n=7$ ?\\
 
\noindent 
{\bf Problem C:} Whether there exist non-trivial meromorphic (non-entire) solutions of (\ref{eqn:3}) when $n=8$ ?\\

The above three problems were asked by Hayman in many occasions. These problems are also mentioned in \cite{I02}, \cite{Gu03} and \cite{GuToh04}.
Very recently, Gundersen proposed to study these problems again in his problem list \cite{Gu16} (see Question 3.1 and 3.3 of this list). The main goal of this article is to settle Problem A and C by proving the following results.

\begin{theorem}
Suppose $n=6$.  Then there is no non-trivial entire solution to (\ref{eqn:3}).
\end{theorem}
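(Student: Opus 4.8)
The plan is to recast an entire solution as a holomorphic curve into the affine Fermat surface and then to force algebraic degeneracy by the method of jet differentials. Suppose $(f,g,h)$ is a non-trivial entire solution of $f^6+g^6+h^6=1$, and set $\phi=[f:g:h:1]\colon \mathbb{C}\to \mathbb{P}^3$. Since $f,g,h$ are entire, $\phi$ never meets the hyperplane $\{W=0\}$, so its image lies in $\overline{S}\setminus D$, where $\overline{S}=\{X^6+Y^6+Z^6=W^6\}$ is the (smooth) degree-$6$ Fermat surface and $D=\overline{S}\cap\{W=0\}$ is a smooth plane sextic. The goal is to show that any such $\phi$ is either constant or has image one of the ``trivial'' lines, which forces $(f,g,h)$ to be of the excluded form $(f,\omega_1 f,\omega_2)$ up to permutation.

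First I would work in the logarithmic category for the pair $(\overline{S},D)$, since the curve omits $D$. A short computation gives the log Chern numbers $\overline{c}_1^2=(K_{\overline S}+D)^2=(3H)^2=54$ and $\overline{c}_2=c_2(\overline S)+(2g_D-2)=108+18=126$, and in particular $K_{\overline S}+D=\mathcal{O}_{\overline S}(3)$ is ample, so $(\overline S,D)$ is of log general type. The aim is to produce a global logarithmic jet differential of some order $k$ and weight $m$ vanishing along an ample divisor $A$, i.e. a nonzero section of $E_{k,m}\,\overline{\Omega}_{\overline S}(\log D)\otimes \mathcal{O}(-A)$. By the logarithmic version of the fundamental vanishing theorem, the lift of $\phi$ to the $k$-jet space then satisfies the corresponding differential equation $P(j_k\phi)\equiv 0$, which confines $\phi$ to a proper algebraic subvariety and so yields algebraic degeneracy: $\overline{\phi(\mathbb{C})}$ lies in an irreducible curve $C\subset \overline S$.

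The decisive difficulty is exactly the construction of these jet differentials in this borderline low-degree range and the passage from their existence to genuine degeneracy. Order-$2$ differentials are not enough, as one checks that $13\overline{c}_1^2-9\overline{c}_2=-432<0$; the general Green--Griffiths--Lang statement is unavailable; and McQuillan-type results requiring $\overline{c}_1^2>\overline{c}_2$ do not apply here since $54<126$. Hence the construction must exploit the special structure of the Fermat surface --- its large automorphism group and its several pencils of high-genus curves (for instance the projections $[X:Y]$, $[X:Z]$, $[Y:Z]$ whose fibres are genus-$10$ sextics, hence hyperbolic) --- either to build the differentials explicitly by symmetrization or to run a fibration and Nevanlinna argument in parallel. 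This is the heart of the proof, and where I expect the main obstacle to lie.

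Finally, I would classify the possible limit curves $C$. A non-constant entire curve avoiding $D$ and lying in $C$ lifts to the normalization, so $\widetilde C$ minus the preimage of $C\cap D$ must be non-hyperbolic; since $C\cap D\neq\varnothing$ for every curve in $\mathbb{P}^3$, this forces $\widetilde C\cong\mathbb{P}^1$ with at most two punctures, i.e. $C$ is a rational curve meeting $D$ in at most two points. It then remains to enumerate such curves on the Fermat sextic: the $3\cdot 6^2=108$ standard lines, which in each of the three coordinate pairings give precisely the trivial solutions $(f,\omega_1 f,\omega_2)$ and their permutations, together with any conics or other low-degree rational curves. The genuinely delicate point of this last step is ruling out the exponential-type curves with two points at infinity that \emph{do} exist for $n\le 5$ (as in Green's and the Gundersen--Tohge examples); one must show that for $n=6$ no such curve survives, so that only the trivial lines remain and the theorem follows.
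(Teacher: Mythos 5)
Your overall framing (an entire solution gives a holomorphic curve into $\overline S\setminus D$ with $(\overline S,D)$ of log general type, so try to produce a logarithmic jet differential vanishing on an ample divisor and invoke the fundamental vanishing theorem) is the right starting point, but the step you yourself flag as ``the heart of the proof'' is precisely the step that fails at $n=6$, and the paper does not carry it out either --- it goes around it. The paper's explicit construction (Cramer's rule applied to the once- and twice-differentiated Fermat equation) produces a log $2$-jet differential $\frac{xy}{z}\Phi$ which is holomorphic with log poles along $D$ exactly for $n\geqslant 6$, but which vanishes along an ample divisor only for $n\geqslant 7$. So for $n=6$ Theorem B cannot be applied, no algebraic degeneracy of the curve is obtained, and your final step (classifying rational curves on the sextic meeting $D$ in at most two points) never gets off the ground. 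Your Chern-number remarks correctly indicate that a general-position existence argument is unavailable, but they also obscure the fact that an explicit order-$2$ log jet differential does exist here and is exactly what the paper uses --- just not through a vanishing theorem.

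What actually closes the case $n=6$ is a quantitative substitute for vanishing: Proposition 5.1(a) of the paper shows that $p:=F^*(xyz\Phi)$, while not forced to be zero, satisfies $T(r,p)=\mathcal{O}(\log T(r,F))$, i.e. it is a small function. Rewriting the jet relation as $(\ln(g'/f'))'+5(\ln(g/f))'=p\,h^6/(fgf'g')$, hence $h^6=\frac{a+b}{p}fgf'g'$ with $m(r,a)=S(r)$ and $m(r,b)=S(r)$ by the Logarithmic Derivative Lemma, and combining Ishizaki's balance $T(r,f)+S(r)=T(r,g)+S(r)=T(r,h)+S(r)$ with the fact that $h$ entire gives $T(r,h)=m(r,h)$, one obtains $6T(r,h)\leqslant(4+\varepsilon)T(r,h)+S(r)$, a contradiction. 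The only degenerate branch is $F^*M_{xyz}=0$, which integrates to $y^n=k_1x^n+k_2$ and is disposed of by a genus computation, yielding exactly the trivial solutions. So your proposal is not wrong in spirit, but it leaves the decisive construction unexecuted, and the vanishing-theorem route it relies on provably does not reach $n=6$; the missing ingredient is the Nevanlinna-theoretic smallness of the pullback of the explicit jet differential together with the ensuing proximity-function count.
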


\begin{theorem}
Suppose $n=8$.  Then there is no non-trivial meromorphic solution to (\ref{eqn:3}).
\end{theorem}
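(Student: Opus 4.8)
The plan is to translate the existence of a non-trivial meromorphic solution to (\ref{eqn:3}) into the existence of a non-constant entire holomorphic curve into the Fermat surface $S_n \subset \mathbb{P}^3$ defined by $X^n+Y^n+Z^n=W^n$, and then derive a contradiction for $n=8$ using the theory of jet differentials. Let me sketch this translation first.

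A direct computation shows that the trivial solutions $(f,\omega_1 f,\omega_2)$, with $\omega_1^8=-1$ and $\omega_2^8=1$, correspond precisely to the lines on $S$ joining $[1:\omega_1:0:0]$ to $[0:0:\omega_2:1]$, together with their coordinate permutations. So, clearing denominators by writing $f=F_0/F_3,\ g=F_1/F_3,\ h=F_2/F_3$ with $F_0,\dots,F_3$ entire and without common zeros, the identity $f^8+g^8+h^8=1$ becomes $F_0^8+F_1^8+F_2^8=F_3^8$, and $\varphi=[F_0:F_1:F_2:F_3]\colon\mathbb{C}\to\mathbb{P}^3$ is a non-constant entire curve with image on $S$. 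It then suffices to prove that the image of every non-constant entire curve in $S$ is contained in one of these special lines.

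Since $\deg S=8>4$, $S$ is smooth of general type, with $c_1^2=(8-4)^2\cdot 8=128$ and $c_2=8^3-4\cdot 8^2+6\cdot 8=304$. The engine of the proof is the fundamental vanishing theorem of Green--Griffiths and Demailly: if $A$ is ample and $P\in H^0(S,E_{k,m}T_S^*\otimes A^{-1})$ is a non-zero jet differential, then every entire curve $\varphi\colon\mathbb{C}\to S$ satisfies $P(j_k\varphi)\equiv 0$, confining $\varphi$ to the base locus of the available jet differentials. The difficulty is that the global Chern numbers are unfavourable: the leading Euler characteristic $\chi(S,E^{GG}_{2,m}T_S^*)\sim\tfrac{m^5}{648}(13c_1^2-9c_2)$ is negative, as $13\cdot128-9\cdot304=-1072<0$, and indeed $13c_1^2-9c_2>0$ only for $\deg S\ge 15$. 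To build differentials at degree $8$ I would instead exploit the fibration $\pi\colon S\dashrightarrow\mathbb{P}^1$, $[X:Y:Z:W]\mapsto[Z:W]$, whose generic fibre is the Fermat curve $X^8+Y^8=\lambda\,T^8$ of genus $21$, hence hyperbolic. The relative cotangent bundle is then positive along the fibres, so it supplies abundant relative jet differentials, which after twisting by a suitable line bundle from the base yield the global jet differentials needed, circumventing the unfavourable global estimate.

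With the differentials in hand, I would split into two cases according to whether $\pi\circ\varphi$ is constant. If it is, then $\varphi$ maps into a single fibre; a generic fibre is the hyperbolic curve of genus $21$, which admits no non-constant entire curve, so $\varphi$ must land in a degenerate fibre. These degenerate fibres sit over the eight points $[c:d]$ with $(c/d)^8=1$, and there the fibre equation reduces to $X^8+Y^8=0$, a union of eight lines joining $[\zeta:1:0:0]$ to $[0:0:c:d]$ with $\zeta^8=-1$; these are exactly the special lines above (the three coordinate pencils accounting for all $3\cdot 8^2=192$ lines of $S$). If $\pi\circ\varphi$ is non-constant, I would combine the jet differential relations with the tautological (logarithmic-derivative) inequality of McQuillan--Demailly to bound the relevant characteristic function and force a contradiction, again pushing $\varphi$ into the base locus. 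The two main obstacles are precisely these: making the construction of jet differentials on the degree-$8$ surface effective despite the negative global Chern numbers, and then carrying out the horizontal-case and base-locus analysis---classifying the rational and elliptic curves of $S$ and verifying, via the arithmetic of the $8$-th roots of unity, that the only ones carrying a non-constant entire curve are the trivial lines.
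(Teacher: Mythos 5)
Your reduction to entire curves on $S_8$, your identification of the trivial solutions with the $3\cdot 8^2$ lines, and your diagnosis that the global Green--Griffiths estimate fails at degree $8$ are all correct. The genuine gap is that the two steps you yourself flag as ``the main obstacles'' are exactly where the entire difficulty of the theorem lives, and neither is carried out. Producing global jet differentials on $S_8$ vanishing on an ample divisor from the relative cotangent bundle of the pencil $[X:Y:Z:W]\mapsto[Z:W]$ is not a routine twisting argument: the base direction contributes negativity ($T^*_{\mathbb{P}^1}=\mathcal{O}(-2)$), there is no semi-positivity theorem for direct images of relative jet bundles that supplies the required twist, and if such sections did exist then Theorem~A would immediately confine every entire curve on $S_8$ to a fixed proper subvariety --- a statement nobody has proved at degree $8$. (Indeed, the explicit $2$-jet differential $xyz\Phi$ constructed in Section~4 of the paper is holomorphic on $S_8$ but its vanishing order at infinity is exactly $n-8=0$, so it just barely fails to vanish on an ample divisor; this is not an accident of one construction but the reason the $n=8$ case is hard.) Likewise the horizontal case ($\pi\circ\varphi$ non-constant) is dismissed with ``combine with the tautological inequality to force a contradiction,'' but no inequality is actually written down, and the McQuillan--Demailly tautological inequality alone does not yield hyperbolicity of a degree-$8$ surface.

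The paper's proof succeeds precisely by \emph{not} insisting on a jet differential that vanishes on an ample divisor. It keeps the explicit holomorphic $2$-jet differential $\Omega=xyz\Phi$ and proves instead (Proposition~5.1(b), via the Logarithmic Derivative Lemma applied chart by chart) that its pullback $p=F^*\Omega$ is a small function, $T(r,p)=\mathcal{O}(\log T(r,F))$. This smallness is then converted into the explicit identity $(\ln(g'/f'))'+7(\ln(g/f))'=p\,h^6/(fgf'g')$, from which Lemma~6.1 extracts the local structure of $f,g,h$ off the zero set of $p$ (no common zeros, only simple zeros, shared simple poles); combined with Ishizaki's equality $T(r,f)=T(r,g)+S(r)=T(r,h)+S(r)$, this yields $6T(r,h)\le(4+\varepsilon)T(r,h)+S(r)$, a contradiction. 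To salvage your route you would need either to actually construct an ample-twisted jet differential on $S_8$, or to replace the fundamental vanishing theorem by a quantitative ``small pullback'' statement of this kind --- which is the paper's key idea and is absent from your proposal.
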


Recall that by a non-trivial solution to (\ref{eqn:3}), we mean a solution which is not of the form $(f(t),\omega_1f(t), \omega_2)$ or by permutation of the indices, where $\omega_1, \omega_2 \in \mathbb{C}$ such that $\omega_1^n=-1$ and $\omega_2^n=1$.\\

Hence combining the above theorems with known results in the literature, we conclude that
\begin{corollary}
(a).  There is no non-trivial meromorphic solution for (\ref{eqn:3}) in the case of $n\geqslant 8$, and there
are non-trivial transcendental meromorphic solution for (\ref{eqn:3}) in the case of $n\leqslant 6$.\\
(b).  There is no non-trivial entire solution for (\ref{eqn:3}) in the case of $n\geqslant 6$, and there
are non-trivial transcendental entire solution for (\ref{eqn:3}) in the case of $n\leqslant 5$.\\
\end{corollary}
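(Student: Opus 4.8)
The plan is to derive the corollary by combining the two theorems above with the non-existence and existence results already recalled in the introduction, organizing the argument by splitting each of the meromorphic assertion (a) and the entire assertion (b) into a non-existence range (large $n$) and an existence range (small $n$).

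For the non-existence halves I would simply glue the new borderline degrees onto Hayman's earlier bounds. For (a), Hayman's result excludes non-trivial meromorphic solutions of (\ref{eqn:3}) for $n \geqslant 9$, and Theorem 1.2 supplies the case $n = 8$, so together they cover all $n \geqslant 8$. For (b), Hayman's result excludes non-trivial entire solutions for $n \geqslant 7$, and Theorem 1.1 supplies the case $n = 6$, covering all $n \geqslant 6$.

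For the existence halves I would exhibit transcendental solutions in each remaining degree. For (b) I would take $\alpha$ to be a transcendental entire function (for instance $\alpha(z) = z$ in the examples built from exponentials) in the explicit families displayed in the introduction --- the formulas for $n = 1, 2$, Lehmer's example for $n = 3$, Gross's or Green's example for $n = 4$, and the Gundersen--Tohge example for $n = 5$ --- each of which then produces a transcendental non-trivial entire solution. For (a) I would invoke Gundersen's constructions, which give transcendental (in fact elliptic) meromorphic solutions for $n = 5$ and $n = 6$, and for $n \leqslant 4$ reuse the entire examples from (b), since every entire solution is a fortiori meromorphic; this covers all $n \leqslant 6$.

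There is no genuine obstacle here, since all the analytic difficulty has been absorbed into Theorems 1.1 and 1.2, and the corollary amounts to bookkeeping across degree ranges. The only points demanding a moment's care are the two qualifiers in the statement: that the exhibited solutions be \emph{non-trivial}, i.e.\ not of the form $(f, \omega_1 f, \omega_2)$ with $\omega_1^n = -1$ and $\omega_2^n = 1$ up to permutation, and that they be \emph{transcendental}. Both follow at once by inspecting the explicit formulas: none of them degenerates to the trivial shape --- in which one component is a constant root of unity and the other two are scalar multiples --- and a transcendental choice of $\alpha$ forces $f, g, h$ to be transcendental.
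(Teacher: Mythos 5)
Your proposal is correct and matches the paper's own justification, which is simply to combine Theorems 1.1 and 1.2 with Hayman's non-existence bounds ($n\geqslant 9$ meromorphic, $n\geqslant 7$ entire) and the explicit existence examples of Gundersen, Gundersen--Tohge, Lehmer, Gross, Green, et al.\ recalled in the introduction. The only difference is cosmetic: the paper additionally reproves the Hayman ranges independently via jet differentials in Section 4 (Corollaries 4.3 and 4.5), but the corollary as stated is justified exactly by the bookkeeping you describe.
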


Part a) of the above corollary partially answers a question of Fujimoto related to his Corollary 6.4 on meromorphic maps on $\mathbb{C}^k$ mentioned in page 273 of \cite{Fujimoto74}.\\

A complete proof of the non-existence of non-trivial meromorphic solutions in (a) and entire solutions in (b) independent of the results of \cite{Ha85} and \cite{I02} will be presented in Section 4 after we have introduced the general theory of jet differentials in Section 2 and some special holomorphic $2$-jet and log $2$-jet differentials in Section 3. To prove our main results, namely Theorem 1.1 and Theorem 1.2, we also need some estimates on the Nevanlinna characteristic functions for the pull back of the special holomorphic $2$-jet and log $2$-jet differentials constructed in Section 3. We provide these estimates in Section 5. We then prove Theorem 1.1 and Theorem 1.2 in Section 6. We also consider generalized Fermat functional equations $f^n+g^m+h^l=1$ in Section 7 and we will prove the non-existence of 
non-trivial meromorphic solutions of it whenever $1/n+1/m+1/l \le 3/8$ (Theorem 7.1). This gives the strongest result obtained so far. Finally, in Section 8, we mention a few related open problems that one may want to consider.  

\section{Holomorphic $2$-jet and log $2$-jet differentials}

To prove Theorem 1.1 and 1.2, we study the properties of entire holomorphic curves on the Fermat surface $S_n$
defined by 
\begin{equation}
X^n+Y^n+Z^n=W^n
\end{equation}
on the complex projective space $\mathbb{P}^3=\{[X:Y:Z:W]\}$.  On the affine part of $\mathbb{P}^3$ ($W \neq 0$), the equation is given by 

\begin{equation}\label{eqn:4} 
x^n+y^n+z^n=1
\end{equation}
where $x:=\frac{X}{W}$, $y:=\frac{Y}{W}$ and $z:=\frac{Z}{W}$.\\

We shall study some special holomorphic or meromorphic $2$-jet differentials on the Fermat surface $S_n$. So we recall the definition of 
$k$-th jet space and $k$-jet differentials (see \cite{Ru01}).

\begin{definition}
The $k$-th jet space $J_k(M) = \cup_{p\in M}J_k(M)_p$ is a bundle over an $n$ dimensional complex manifold $M$, where, for
each point $p\in M$, every element $v \in J_k(M)_p$ is a set of complex numbers
$(\xi_{j\alpha})_{1\le j \le k, 1\le \alpha \le n}$
with respect to a local coordinates $z_\alpha (1\le \alpha \le n)$ of $M$
in a neighborhood of $p$. Define $d^j z_\alpha: J_k(M) \to \mathbb{C}$ by $d^j z_\alpha (v) = \xi_{j\alpha}$ for
$v = (\xi_{j\alpha})_{1\le j \le k, 1\le \alpha \le n}$.
\end{definition}

\begin{definition}
A holomorphic $k$-jet differential $\omega$ (respectively
meromorphic $k$-jet differential) on an $n$ dimensional complex manifold $M$ assigns, at each
point $p\in M$, a function $\omega(p)$ on $J_k(M)_p$ such that, with local coordinates
$z_1, \cdots, z_n, \omega$ is locally a polynomial, with holomorphic 
(respectively meromorphic) functions as coefficients, in the variables 
$d^\ell  z_j  ( 1\le \ell \le k, 1\le j \le n)$ 
A meromorphic $k$-jet differential $\omega$ is said to be a log-pole $k$-jet differential if
it is locally a polynomial, with holomorphic functions as coefficients, in the
variables $d^\ell  z_j, d^\nu \log g_{\lambda} (1\le \ell \le k, 1 \le j \le n, 1\le \nu \le k, 1\le \lambda \le \Lambda)$,
where $g_{\lambda} ( 1\le \lambda \le \Lambda)$  are local holomorphic functions whose zero-divisors
are contained in a finite number of global nonnegative divisors of $M$.
\end{definition}

Thus, in terms of local coordinates $z_1, \ldots , z_n$, a meromorphic $k$-jet differential
is expressed in the form
$$\omega = \underset \nu \sum \omega_{\nu_{1, 1}\cdots \nu_{1, k}\cdots \nu_{n, 1}\cdots
\nu_{n, k}} (dz_1)^{\nu_{1,1}} \cdots  (d^k z_1)^{\nu_{1,k}} \cdots (dz_n)^{\nu_{n,1}} \cdots
(d^k z_n)^{\nu_{n,k}} $$
where the summation is over the $kn$-tuple
$$\nu = (\nu_{1, 1}\cdots \nu_{1, k}\cdots \nu_{n, 1}\cdots
\nu_{n, k})
$$
and $\omega_{\nu_{1, 1}\cdots \nu_{1, k}\cdots \nu_{n, 1}\cdots
\nu_{n, k}} $ is a meromorphic function locally defined. If $f:\mathbb{C} \to M$ is a holomorphic curve, then in terms of the local coordinates of $M$, $f$ naturally pulls back jet differentials and the above expression pulls back to

$$f^*\omega:=\underset \nu \sum \omega_{\nu_{1, 1}\cdots \nu_{1, k}\cdots \nu_{n, 1}\cdots
\nu_{n, k}}(f) (f_1')^{\nu_{1,1}} \cdots  (f_1^{(k)})^{\nu_{1,k}} \cdots (f_n')^{\nu_{n,1}} \cdots
(f_n^{(k)})^{\nu_{n,k}}$$

We will need the following results on the vanishing of pullback of jet differential. \\

Theorem A. (\cite{SY96}, \cite{SY97}) Let $M$ be a compact complex manifold of complex dimension $n$ and $D$ be an
ample divisor in $M$. Let $\omega$ be a $k$-jet differential on $M$ which vanishes on $D$ 
but is not identically zero on $M$. Then for any holomorphic map $f: \mathbb{C} \to M$, the
pullback $f^{*}\omega$ is identically zero on $\mathbb{C}$.\\

Theorem B. (\cite{SY97}) Let $k$ be a positive integer. Let $M$ be a compact complex manifold of complex
dimension $n$ and $D$ be an ample divisor in $M$. Let $Z_1,... ,Z_p$ be distinct irreducible
complex hypersurfaces in $M$. Let $\omega$ be a meromorphic $k$-jet differential on $M$ of
of at most log-pole singularity along $\cup_{i=1}^p Z_i$ such that $\omega$ vanishes on $D$
and is not identically zero on $M$. Then for any holomorphic map $f: \mathbb{C} \to M - \cup_{i=1}^p Z_i$,
the pullback $f^*\omega$ is identically zero on $\mathbb{C}$.\\

\section{Some general discussions}

One of the original goals of the project is to give a uniform treatment to all cases involved, namely, to prove non-existence of non-rational 
entire holomorphic curves (coming either from meromorphic functions or entire holomorphic functions on the affine part) for large degree,
and to explain the existence of such curves in low degree.  This is in principle possible with the use of holomorphic jet differentials.  
Denote by $S_n$ the Fermat surface of degree $n$ in $\mathbb{P}^3$.  Denote by $F_{k,m}$ the holomorphic jet bundle of order $k$ and 
homogeneous weight
$m$.  The following lemma is well-known, and can be found in \cite{GG80}.

\begin{lemma} (a). $S_n$ is a surface of general type if $n\geqslant 5$.\\
(b). Assume that $n\geqslant 5$.  Then $H^0(S_n, F_{1,m})=0$ for all $m>0$.\\
(c). Assume that $n\geqslant 5$. Sections of $H^0(S_n, F_{k,m})$ gives a birational mapping of $S_n$ if $k,m$ are sufficiently large.
\end{lemma}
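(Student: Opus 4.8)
The three parts are standard facts about smooth hypersurfaces in $\mathbb{P}^3$, and my plan is to derive them from the adjunction formula together with the Euler and conormal sequences, recording the cohomological input from Bott's vanishing theorem on $\mathbb{P}^3$.

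For (a) I would start from adjunction. Since $S_n$ is a smooth hypersurface of degree $n$ in $\mathbb{P}^3$, its conormal bundle is $\mathcal{O}_{S_n}(-n)$, and hence $K_{S_n}\cong\left(K_{\mathbb{P}^3}\otimes\mathcal{O}(n)\right)|_{S_n}\cong\mathcal{O}_{S_n}(n-4)$. For $n\geqslant 5$ the integer $n-4$ is positive, so $K_{S_n}$ is the restriction of the ample bundle $\mathcal{O}_{\mathbb{P}^3}(n-4)$ to $S_n$ and is therefore ample. A smooth projective surface with ample canonical bundle is minimal of general type, which proves (a).

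For (b), recall $F_{1,m}=S^m\Omega^1_{S_n}$, so the claim is that $S_n$ carries no nonzero symmetric differentials. The plan is to combine two exact sequences. Restricting the Euler sequence $0\to\Omega^1_{\mathbb{P}^3}\to\mathcal{O}_{\mathbb{P}^3}(-1)^{\oplus 4}\to\mathcal{O}_{\mathbb{P}^3}\to 0$ to $S_n$ gives an inclusion $\Omega^1_{\mathbb{P}^3}|_{S_n}\hookrightarrow\mathcal{O}_{S_n}(-1)^{\oplus 4}$, hence $S^m(\Omega^1_{\mathbb{P}^3}|_{S_n})\hookrightarrow\mathcal{O}_{S_n}(-m)^{\oplus\binom{m+3}{3}}$; since $\mathcal{O}_{S_n}(-m)$ has no sections for $m>0$, we obtain $H^0(S_n,S^m(\Omega^1_{\mathbb{P}^3}|_{S_n}))=0$. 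I would then feed this into the conormal sequence $0\to\mathcal{O}_{S_n}(-n)\to\Omega^1_{\mathbb{P}^3}|_{S_n}\to\Omega^1_{S_n}\to 0$: taking $m$-th symmetric powers produces a filtration of $S^m(\Omega^1_{\mathbb{P}^3}|_{S_n})$ whose top quotient is $S^m\Omega^1_{S_n}$ and whose lower graded pieces are $\mathcal{O}_{S_n}(-in)\otimes S^{m-i}\Omega^1_{S_n}$ for $1\leqslant i\leqslant m$. Since the ambient space has no sections, the associated long exact sequence shows that $H^0(S_n,S^m\Omega^1_{S_n})$ injects into the $H^1$ of the subsheaf assembled from these negatively twisted pieces.

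The main obstacle is precisely this last step: one must show that the relevant first cohomology groups vanish, and here the hypersurface structure (not merely general type, which alone does not forbid symmetric differentials) is essential. I would handle it by iterating the conormal sequence to reduce each graded piece to $S^j(\Omega^1_{\mathbb{P}^3}|_{S_n})$-terms, transplanting the computation to $\mathbb{P}^3$ via the restriction sequence $0\to S^m\Omega^1_{\mathbb{P}^3}(-n)\to S^m\Omega^1_{\mathbb{P}^3}\to(S^m\Omega^1_{\mathbb{P}^3})|_{S_n}\to 0$, and applying Bott's formulas for $H^q(\mathbb{P}^3,S^m\Omega^1_{\mathbb{P}^3}(\ell))$, which vanish in the ranges forced by the negative twists $\ell=-in$ once $n\geqslant 5$. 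This cohomological bookkeeping — rather than any single conceptual point — is where the degree hypothesis and the bulk of the effort reside; the computation is carried out in \cite{GG80}. Finally, for (c) I would deduce birationality from part (a): because $S_n$ is of general type its canonical bundle is big, so the pluricanonical maps $\varphi_{|pK_{S_n}|}$ are birational for all large $p$, and it suffices to realise a pluricanonical subsystem inside the jet differentials. Using order-$2$ jets one forms the Wronskian-type differential, which carries a factor of $K_{S_n}$; multiplying it by symmetric differentials exhibits sections of powers of $K_{S_n}$ as components of $H^0(S_n,F_{k,m})$ for suitable $k,m$. Hence for $k,m$ sufficiently large the linear system $|F_{k,m}|$ dominates a birational pluricanonical map and is itself birational, as recorded in \cite{GG80}.
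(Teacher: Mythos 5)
Parts (a) and (b) of your proposal are fine. Your adjunction argument for (a) is exactly the paper's ($K_{S_n}=\mathcal{O}_{S_n}(n-4)$ is ample for $n\geqslant 5$), and for (b) your Euler-sequence/conormal-sequence/Bott outline is the standard proof of the statement that the paper simply attributes to Sakai via \cite{GG80}; note only that the degree plays no essential role there, since smooth surfaces in $\mathbb{P}^3$ of any degree carry no nonzero symmetric differentials.

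The genuine gap is in (c), where your mechanism for producing sections of $F_{k,m}$ does not work as stated. First, ``multiplying the Wronskian by symmetric differentials'' produces nothing: by your own part (b) the only symmetric differential on $S_n$ is zero, so presumably you mean multiplying by pluricanonical forms. But even then there is a real obstruction: the $2$-jet Wronskian $W=dz_1\,d^2z_2-dz_2\,d^2z_1$ is not itself a globally defined jet differential --- under a change of coordinates it transforms by the Jacobian determinant only modulo terms that are cubic in the first-order jets --- so a product $\tau\cdot W^p$ with $\tau\in H^0(S_n,K_{S_n}^{\otimes p})$ defines a section only of the top graded quotient of $F_{2,3p}$ for the order filtration, and lifting it to an honest element of $H^0(S_n,F_{2,3p})$ requires killing coboundary classes in $H^1$ of the lower graded pieces $S^{l_1}\Omega_{S_n}\otimes S^{l_2}\Omega_{S_n}$ ($l_2<p$). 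You do not address this lifting, and it is precisely the hard point: if it were automatic, pluricanonical forms would yield global $2$-jet differentials on an arbitrary surface of general type, which is not the case. The paper circumvents this entirely: it computes $\chi(S_n,F_{k,m})$ by Riemann--Roch, shows it is large, and kills $h^2(S_n,F_{k,m})$ by Bogomolov's vanishing theorem using the semistability of $T_{S_n}$, so that $h^0\geqslant\chi$ is large; the birationality statement is then the explicit computation of \cite{GG80}, \S1.10--1.21. Your step (c) needs to be replaced by that argument, or supplemented by a proof that the relevant $H^1$ obstructions vanish.
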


\begin{proof}  (a) follows from the Adjunction Formula.  In fact, the canonical line bundle $K_{S_n}=(K_{\mathbb{P}^3}+nH)|_{S_n}=(n-4)|_{S_n}$ is ample
if $n>4$.

For (b), we observe that $H^0(S_n, F_{1,m})=H^0(S_n, S^m(\Omega_{S_n}))=0$ from a result of Sakai (cf. \cite{GG80}), here $S^m(\Omega_{S_n})$ denotes the space
of $m$-th symmetric differentials.

(c) is a result of Riemann-Roch Formula and is computed in \cite{GG80}, \S1.10-1.21.  Briefly, 
$$h^0(S_n, F_{k,m})-h^1(S_n, F_{k,m})+h^2(S_n, F_{k,m})=\chi((S_n, F_{k,m})$$
The right hand side is large, while $h^2(S_n, F_{k,m})$ vanishes from a vanishing theorem of Bogomolov, making use of the semi-stability of the tangent bundle of $S_n$.

\end{proof}

The following result is an immediate corollary.

\begin{proposition}
Suppose $n\geqslant 5$.  Then any entire holomorphic curve on $S_n$ lies in the integral curve of a certain ordinary differential equation on $S_n$.
\end{proposition}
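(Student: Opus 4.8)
The plan is to deduce the Proposition directly from Lemma 3.1 together with Theorem A. The key observation is that the statement "lies in the integral curve of a certain ordinary differential equation" is precisely the assertion that an entire holomorphic curve $f:\mathbb{C}\to S_n$ satisfies a nontrivial algebraic relation among $f$ and its derivatives up to some order $k$, which is exactly a relation of the form $(f^*\omega)\equiv 0$ for a suitable global holomorphic $k$-jet differential $\omega$ on $S_n$.

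First I would invoke Lemma 3.1(c): since $n\geqslant 5$, for all sufficiently large $k$ and $m$ the space $H^0(S_n,F_{k,m})$ is nonzero (indeed large enough to give a birational map). In particular there exists a nonzero global holomorphic $k$-jet differential $\omega$ on the compact surface $S_n$. The next step is to arrange that $\omega$ vanishes along an ample divisor, so that Theorem A applies. By Lemma 3.1(a) the canonical bundle $K_{S_n}=(n-4)H|_{S_n}$ is ample for $n>4$, and the Riemann--Roch estimate in part (c) shows that the dimension of $H^0(S_n,F_{k,m})$ grows like a positive multiple of $k\,m^2$ (faster than the number of conditions imposed by vanishing to a fixed order along a fixed ample divisor $D$). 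Hence, for $k,m$ large, I can choose a nonzero section $\omega\in H^0(S_n,F_{k,m})$ that in addition vanishes on $D$; equivalently $\omega$ lies in $H^0(S_n,F_{k,m}\otimes\mathcal{O}(-D))$, which is still nonzero by the dimension count.

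With such an $\omega$ in hand, Theorem A gives immediately that for every holomorphic map $f:\mathbb{C}\to S_n$ the pullback $f^*\omega\equiv 0$ on $\mathbb{C}$. Unwinding the definition of a $k$-jet differential, $f^*\omega$ is a polynomial expression in the local coordinate functions of $f$ and their derivatives $f_j',\dots,f_j^{(k)}$ with holomorphic coefficients pulled back from $S_n$; the identity $f^*\omega\equiv 0$ is therefore a nontrivial algebraic differential equation of order $k$ satisfied identically by the entire curve $f$. This is exactly the statement that $f$ lies in the integral curve locus of an ordinary differential equation on $S_n$, which completes the proof.

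The main obstacle is the dimension bookkeeping in the second step: one must verify that vanishing along the ample divisor $D$ does not exhaust the sections produced by Lemma 3.1(c), i.e. that $h^0(S_n,F_{k,m}\otimes\mathcal{O}(-D))>0$ for suitable $k,m$. This follows because the Euler characteristic computed via Riemann--Roch is a polynomial of positive leading term in $(k,m)$, while subtracting $D$ alters only lower-order terms and the higher cohomology $h^2$ still vanishes by the Bogomolov-type argument cited in the proof of Lemma 3.1; but making this quantitative is the only genuinely technical point. Everything else is a formal application of Theorem A and the interpretation of $f^*\omega\equiv 0$ as an algebraic differential equation.
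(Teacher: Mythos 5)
Your proposal is correct and follows essentially the same route as the paper: invoke Lemma 3.1(c) to produce a nontrivial holomorphic $k$-jet differential vanishing on an ample divisor, apply Theorem A to get $f^*\omega\equiv 0$, and read this off as an order-$k$ ODE satisfied by the curve. The only difference is that you spell out the twist by $\mathcal{O}(-D)$ and the Riemann--Roch bookkeeping needed to ensure sections vanishing on the ample divisor still exist, a point the paper leaves implicit in its citation of part (c).
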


\begin{proof}  Let $f:{\mathbb C}\rightarrow S_n$ be an entire holomorphic curve.
From part (c) of the previous lemma, we know that there exists a non-trivial holomorphic jet differential $\eta\in H^0(S_n, F_{k,m})$ vanishing on an ample divisor of $S_n$
if $k$ and $m$ are
sufficiently large.  From Theorem A, we conclude that $f^*\eta=0$.  This implies that the image of $f$ satisfies an ordinary differential equation of order $k$ on $S_n$. 

\end{proof}

\begin{remark}
The above theorem in principle equips with us a tool to locate all the entire holomorphic curves on $S_n$, by integrating out the differential equations involved.  The focus here
is not on complex hyperbolicity, that is, non-existence of entire holomorphic curves, but rather the properties of such curves.  The trouble is
that the differential equations involved are not explicit and hence difficult to work with.  Results of Section 4 shows how to construct some explicit jet differentials for $n\geqslant 8$ (vanishing 
on an ample divisor for $n\geqslant 9$).  In the case of $n\geqslant 9$, the integral curves are found and the image of entire holomorphic curves are determined.
\end{remark}

\section{Special jet differentials and entire holomorphic curves on Fermat surfaces of high degree }

We will apply Theorem A and B to some $2$-jet differentials obtained from (\ref{eqn:4}).

By taking derivatives of equation (\ref{eqn:4}), we obtain
\begin{eqnarray}
0&=&x^{n-1}dx+y^{n-1}dy+z^{n-1}dz\\
0&=&x^{n-1}D^2x+y^{n-1}D^2y+z^{n-1}D^2z
\end{eqnarray}
where $D^2F=d^2F+\frac{n-1}F(dF)^2$ 
for a function $F$.

Applying Crammer's rule to equations (2.2), (4.1) and (4.2), it follows that 
\begin{equation}\label{eqn:5} 
\frac{
\left|\begin{array}{cc}
d y&d z\\
D^2y&D^2z
\end{array}\right|}{x^{n-1}}
=\frac{
\left|\begin{array}{cc}
d z&d x\\
D^2z&D^2x
\end{array}\right|}{y^{n-1}}
=\frac{
\left|\begin{array}{cc}
d x&d y\\
D^2x&D^2y
\end{array}\right|}{z^{n-1}}
\end{equation}

Let $\Phi$ be the above expression.

We shall need the following properties of the $2$-jet differential $\Phi$.\\

\begin{lemma}
We have the following identity.
\begin{equation}\label{eqn:8} 
\Phi=\left|\begin{array}{ccc}
x&y&z\\
dx&d y&d z\\
D^2x&D^2y&D^2z
\end{array}\right|
=(xyz)M_{xyz},
\end{equation}
where 
$$M_{xyz}=\left|\begin{array}{ccc}
1&1&1\\
\frac{dx}x&\frac{dy}y&\frac{dz}{z}\\
\frac{D^2x}x&\frac{D^2y}y&\frac{D^2z}z
\end{array}\right|.
$$
Hence for
$$
M_{yz}=\left|\begin{array}{cc}
\frac{dy}y&\frac{dz}{z}\\
\frac{D^2y}y&\frac{D^2z}z
\end{array}\right|, \ 
M_{zx}=\left|\begin{array}{cc}
\frac{dz}{z}&\frac{dx}x\\
\frac{D^2z}z&\frac{D^2x}x
\end{array}\right|, \ 
M_{xy}=\left|\begin{array}{cc}
\frac{dx}x&\frac{dy}y\\
\frac{D^2x}x&\frac{D^2y}y
\end{array}\right|, $$
\begin{equation}\label{eqn:9} 
\Phi=\frac{(yz)M_{yz}}{x^{n-1}}=\frac{(zx)M_{zx}}{y^{n-1}}=\frac{(xy)M_{xy}}{z^{n-1}}=(xyz)M_{xyz}.
\end{equation}
\end{lemma}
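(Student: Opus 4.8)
The claimed identity has two parts: first, that $\Phi$ (the common value from \eqref{eqn:5}) equals the $3\times 3$ determinant with rows $(x,y,z)$, $(dx,dy,dz)$, $(D^2x,D^2y,D^2z)$, and second, that this determinant factors as $(xyz)M_{xyz}$. The factorization \eqref{eqn:8} is the easy part: I would factor $x$ out of the first column, $y$ out of the second, and $z$ out of the third of the $3\times 3$ determinant. Since the first row is $(x,y,z)$, the first row of the reduced determinant becomes $(1,1,1)$; the second row entries $dx,dy,dz$ become $dx/x,dy/y,dz/z$; and the third row entries $D^2x,D^2y,D^2z$ become $D^2x/x,D^2y/y,D^2z/z$. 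This is exactly $(xyz)M_{xyz}$, giving the last equality in \eqref{eqn:9}.

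**The main work is identifying $\Phi$ with the $3\times 3$ determinant and deriving the three-fold expression \eqref{eqn:9}.** Starting from \eqref{eqn:5}, the common ratio $\Phi$ is, e.g., $\frac{1}{x^{n-1}}\left|\begin{smallmatrix} dy & dz \\ D^2y & D^2z \end{smallmatrix}\right|$. The key observation is that the constraint equations (2.2) and (4.2), namely $x^{n-1}dx+y^{n-1}dy+z^{n-1}dz=0$ and $x^{n-1}D^2x+y^{n-1}D^2y+z^{n-1}D^2z=0$, let me rewrite these $2\times 2$ minors. I would expand the $3\times 3$ determinant in \eqref{eqn:8} along its first row: it equals $x\left|\begin{smallmatrix} dy & dz \\ D^2y & D^2z \end{smallmatrix}\right| - y\left|\begin{smallmatrix} dx & dz \\ D^2x & D^2z \end{smallmatrix}\right| + z\left|\begin{smallmatrix} dx & dy \\ D^2x & D^2y \end{smallmatrix}\right|$. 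To show this cofactor expansion reduces to a single term times a power of a variable, I substitute the linear relations: from (2.2) and (4.2) one can solve for the pairs $(dx,D^2x)$ (or the appropriate combinations) and show that the three cofactors are proportional with ratios governed by the $x^{n-1},y^{n-1},z^{n-1}$ weights, precisely recovering \eqref{eqn:5}.

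**Concretely, the cleanest route is to verify that multiplying the first row of the $3\times 3$ determinant by suitable weights and using row operations reproduces the constraint equations.** Consider forming the linear combination of columns (or rows) dictated by the weights $x^{n-1},y^{n-1},z^{n-1}$: because $\sum x^{n-1}dx=0$ and $\sum x^{n-1}D^2x=0$ by (2.2) and (4.2), the vector $(x^{n-1},y^{n-1},z^{n-1})$ is orthogonal to both the $d$-row and the $D^2$-row. Hence when I compute any $2\times 2$ minor of the last two rows and compare across columns, Cramer-type reasoning forces the three quotients in \eqref{eqn:5} to be equal. Tracking the sign and the factored-out variable in each case yields the three expressions $\frac{(yz)M_{yz}}{x^{n-1}}$, $\frac{(zx)M_{zx}}{y^{n-1}}$, $\frac{(xy)M_{xy}}{z^{n-1}}$, where factoring $y,z$ (resp. $z,x$; resp. $x,y$) out of the two surviving columns converts each $2\times 2$ minor into the corresponding $M_{yz}$, $M_{zx}$, $M_{xy}$.

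**I expect the main obstacle to be bookkeeping of signs and the factored variables** rather than any conceptual difficulty. The cofactor expansion introduces alternating signs, and each $2\times 2$ minor $\left|\begin{smallmatrix} dy & dz \\ D^2y & D^2z \end{smallmatrix}\right|$ becomes $(yz)M_{yz}$ only after carefully extracting $y$ from one column and $z$ from the other; getting the cyclic pattern $(yz,x^{n-1})$, $(zx,y^{n-1})$, $(xy,z^{n-1})$ consistent with \eqref{eqn:5} requires attention. Once the orthogonality relations from (2.2) and (4.2) are in hand, however, the equalities in \eqref{eqn:9} follow by straightforward (if tedious) determinant manipulation, and no deeper input is needed.
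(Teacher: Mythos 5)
Your proposal is essentially the paper's own argument run in the opposite direction. The paper starts from $\Phi$, writes $\Phi=\frac{x^n\Phi+y^n\Phi+z^n\Phi}{x^n+y^n+z^n}$, substitutes a different one of the three Cramer-rule expressions for $\Phi$ into each term of the numerator, and recognizes the resulting sum
$x\left|\begin{smallmatrix} dy & dz\\ D^2y & D^2z\end{smallmatrix}\right|
+y\left|\begin{smallmatrix} dz & dx\\ D^2z & D^2x\end{smallmatrix}\right|
+z\left|\begin{smallmatrix} dx & dy\\ D^2x & D^2y\end{smallmatrix}\right|$
as the first-row cofactor expansion of the $3\times 3$ determinant; you start from the determinant, expand along the first row, and use the same three equal ratios to identify the cofactors as $x^{n-1}\Phi$, $y^{n-1}\Phi$, $z^{n-1}\Phi$. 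The column factorizations producing $M_{xyz}$, $M_{yz}$, $M_{zx}$, $M_{xy}$ are identical in both treatments. The one point you leave unstated is the step on which the whole identity turns: after identifying the cofactors, the cofactor expansion equals $(x^n+y^n+z^n)\Phi$, and this collapses to $\Phi$ only because the affine Fermat equation $x^n+y^n+z^n=1$ holds on $S_n$. Your write-up never invokes this relation; without it the $3\times 3$ determinant equals $(x^n+y^n+z^n)\Phi$ rather than $\Phi$, and the claimed identity is false off the surface. Make that single line explicit and the proof is complete; the sign and factorization bookkeeping you worry about is indeed routine.
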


\begin{proof} The first identity follows from (\ref{eqn:5}) and the fact that
\begin{eqnarray*}
\Phi&=&\frac{x^n\Phi+y^n\Phi+z^n\Phi}{x^n+y^n+z^n}\\
&=&\frac1{x^n+y^n+z^n}[
x\left|\begin{array}{cc}
d y&d z\\
D^2y&D^2z
\end{array}\right|
+
y\left|\begin{array}{cc}
d z&d x\\
D^2z&D^2x
\end{array}\right|
+
z\left|\begin{array}{cc}
d x&d y\\
D^2x&D^2y
\end{array}\right|]\\
&=&\left|\begin{array}{ccc}
x&y&z\\
dx&d y&d z\\
D^2x&D^2y&D^2z
\end{array}\right|,
\end{eqnarray*}
where we used the fact that $x^n+y^n+z^n=1$ from definition.  Hence (\ref{eqn:8}) follows and (\ref{eqn:9}) then follows from (\ref{eqn:5}) and (\ref{eqn:8}).

\end{proof}

\begin{lemma}
The  $2$-jet differential $xyz\Phi$ is holomorphic for $n\geqslant 8.$  Moreover, $xyz\Phi$ vanishes along an ample divisor 
on $S_n$ for $n\geqslant 9$.
\end{lemma}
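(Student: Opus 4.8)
The plan is to compute the order of the meromorphic $2$-jet differential $xyz\Phi$ along every irreducible curve of $S_n$ that can enter its polar locus, and then to read off both the asserted holomorphicity and the ampleness of its zero divisor. On the affine part $\{W\neq 0\}$ the only candidate polar curves are the coordinate curves $\{x=0\}$, $\{y=0\}$, $\{z=0\}$; at infinity the only candidate is the Fermat curve $C_\infty=\{W=0\}\cap S_n$. Since $C_\infty$ is cut out by a hyperplane it is ample, so it will suffice to show that $xyz\Phi$ is holomorphic along all four curves and, for large $n$, vanishes along $C_\infty$.

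First I would dispose of the affine part. Expanding the determinant $M_{xyz}$ along its first row gives $M_{xyz}=M_{yz}+M_{zx}+M_{xy}$, and each of these three $2\times2$ minors has at worst a double pole along the two coordinate curves occurring in it (from entries such as $D^2x/x=d^2x/x+\tfrac{n-1}{x^2}(dx)^2$) and no pole along the third. Thus $M_{xyz}$ has poles of order at most two along each of $\{x=0\},\{y=0\},\{z=0\}$, and by the identity $\Phi=(xyz)M_{xyz}$ of the previous lemma the product $xyz\Phi=(xyz)^2M_{xyz}$ is holomorphic on the entire affine part, for every $n$. Hence the numerical threshold must come entirely from $C_\infty$.

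The heart of the matter is therefore the behaviour at infinity. I would pass to the chart $X\neq 0$ with coordinates $u=Y/X$, $v=Z/X$, $w=W/X$, where $C_\infty=\{w=0\}$ and $x=1/w$, $y=u/w$, $z=v/w$. Substituting these into the determinant form of $\Phi$ and performing row reduction (subtracting a multiple of the first row from the second, and multiples of the first two rows from the third so as to clear the $w^{-3}(dw)^2$, $w^{-2}dw$ and $w^{-2}d^2w$ terms) should collapse the determinant to $\Phi=w^{-3}\Psi$, where $\Psi=du\,D^2v-dv\,D^2u$ has exactly the shape of the affine numerator, now in the variables $u,v$. Next, the surface relations $w^{n-1}dw=u^{n-1}du+v^{n-1}dv$ and its second-order companion $w^{n-1}D^2w=u^{n-1}D^2u+v^{n-1}D^2v$ let me eliminate $dv$ and $D^2v$, and I expect the clean factorization $\Psi=v^{1-n}w^{n-1}\bigl(du\,D^2w-dw\,D^2u\bigr)$. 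Because $du\,D^2w-dw\,D^2u=\tfrac{n-1}{w}du(dw)^2+O(1)$ has only a simple pole along $\{w=0\}$, this exhibits $\Psi$ as vanishing to order exactly $n-2$ there; since $xyz=uv/w^{3}$, we obtain $xyz\Phi=uv\,w^{-6}\Psi$, of order $(n-2)-6=n-8$ along $C_\infty$.

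Assembling the orders completes the argument: along $C_\infty$ the differential $xyz\Phi$ is holomorphic exactly when $n-8\geq0$, that is $n\geq8$, and it vanishes there to order $n-8\geq1$ exactly when $n\geq9$, with $C_\infty$ ample as a hyperplane section. Having checked holomorphicity along all four candidate curves, the only points left uncovered are the finitely many intersections $C_\infty\cap\{X=0\},\{Y=0\},\{Z=0\}$; these lie in codimension two, so smoothness of $S_n$ lets $xyz\Phi$ extend holomorphically across them (alternatively one invokes the $X\leftrightarrow Y\leftrightarrow Z$ symmetry of the construction). The step I expect to be the main obstacle is the infinity computation: arranging the row reduction and the elimination of $dv,D^2v$ so as to produce the precise factor $w^{n-2}$, since it is this exponent, set against the $w^{-6}$ supplied by $xyz$, that yields the sharp thresholds $n\geq8$ and $n\geq9$.
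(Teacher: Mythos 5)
Your argument is correct and reaches the same thresholds as the paper, but the decisive computation at infinity is organized differently, so it is worth comparing the two. The paper also works on the affine chart first (using the three representations of $\Phi$ in (\ref{eqn:9}) rather than your equivalent repackaging $xyz\Phi=(xyz)^2M_{xyz}$), and then passes to the same chart $x=1/w$, $y=u/w$, $z=v/w$; but there it simply writes $\Phi=x^{1-n}\left[(dy\,d^2z-dz\,d^2y)+(n-1)\,dy\,dz\,(d\log z-d\log y)\right]$ and counts pole orders of the two bracketed terms, namely $3$ and $4$ (the first because of a cancellation in the antisymmetric combination $dy\,d^2z-dz\,d^2y=w^{-3}W(w,u,v)$), so the numerator of $xyz\Phi$ has pole order $7$ against the zero of order $n-1$ from $x^{1-n}=w^{n-1}$, giving pole order $8-n$. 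You instead exploit the permutation symmetry of the whole construction among the four homogeneous coordinates: the factorization $\Phi=w^{-3}\bigl(du\,D^2v-dv\,D^2u\bigr)$ that you ``expect'' is indeed correct --- the quickest way to see it is to note that $\Phi=W(x^n,y^n,z^n)/\bigl(n^2(xyz)^{n-1}\bigr)$ and that $W(w^{-n},w^{-n}u^n,w^{-n}v^n)=w^{-3n}W(1,u^n,v^n)$ --- after which the Cramer identity for the surface $1+u^n+v^n=w^n$ supplies the explicit factor $v^{1-n}w^{n-1}$ times an expression with exactly a simple pole along $w=0$. Your route is a little longer but yields the exact order $n-8$ along the hyperplane section and makes both thresholds transparent, whereas the paper's two-term pole count is quicker but hinges on spotting the order-$3$ cancellation; your closing remark that the finitely many points of $C_\infty\cap\{XYZ=0\}$ are handled by symmetry (or by extension across a codimension-two set, as the paper itself does in its Proposition 7.2) correctly plugs the only hole left by the chart-by-chart computation.
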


\begin{proof}  Observe that $D^2x$ has only a simple pole at $x=0$, and similarly for $D^2y$ and $D^2z$ by 
permutation in $x, y, z$.  Hence by looking at the first term of (\ref{eqn:5}), we see that on the affine part, $xyz\Phi$ is holomorphic
except possibly at $x=0$.  On the other hand, by looking at the second (respectively third) term, $xyz\Phi$ is holomorphic
except at $y=0$ (respectively $z=0$).  Note that  $\{x=0, y=0, z=0\}$ has trivial intersection with $S_n$ on $\mathbb{P}^3$, since
$S_n$ is smooth.  Hence $xyz\Phi$ is holomorphic at the affine part of $S_n$.  

Consider now the pole order of $\Phi$ at $\infty$. For the first term on the right hand side of (\ref{eqn:5}), we may write
\begin{eqnarray}\label{eqn:6} 
\Phi&=&\frac1{x^{n-1}}(dyD^2z-dz D^2y)\nonumber \\
&=&\frac1{x^{n-1}}[(dyd^2z-dzd^2y)+(n-1)dydz(d\log z-d\log y).
\end{eqnarray}

 Suppose that the infinity is defined by $w=0$ in local coordinate.
It follows that we may consider the transformations 
\begin{equation}\label{eqn:7} 
x=\frac1w, \ \ y=\frac uw, \ \ z=\frac vw.
\end{equation}
It follows by direct computation that $dyd^2z-dzd^2y$ has a pole of order $3$ at $\infty$ and $dydz(d\log z-d\log y)$ has a pole of order $4$ at $\infty$.  Hence the numerator of $xyz\Phi$ has
pole order $7$ at $\infty$.  The denominator is $x^{n-1}$, giving rise to a zero of $\Phi$ of order $n-1$ at $\infty$.  Hence the pole
order of $xyz\Phi$ at $\infty$ is $8-n$.  Hence $xyz\Phi$ is holomorphic for $n\geqslant 8$.  Moreover, if $n\geqslant 9$, $xyz\Phi$
vanishes along the ample divisor given by the hyperplane at $\infty$.

\end{proof}

\begin{corollary} (\cite{Ha85})
For $n\ge 9$, there is no non-trivial meromorphic solution of (\ref{eqn:3}).
\end{corollary}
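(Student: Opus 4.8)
The plan is to translate the non-existence of non-trivial meromorphic solutions of~(\ref{eqn:3}) into a statement about entire holomorphic curves on the Fermat surface $S_n$, and then apply the jet-differential vanishing theorems already at our disposal. First I would observe that a non-trivial meromorphic solution $(f,g,h)$ of $f^n+g^n+h^n=1$ on $\mathbb{C}$ is precisely the data of a non-constant holomorphic curve $F:\mathbb{C}\to S_n$, given in the affine chart $W\neq 0$ by $F(t)=(f(t),g(t),h(t))$ (here a meromorphic triple extends to a holomorphic map into the projective surface by clearing denominators, using compactness of $\mathbb{P}^3$). The triviality condition on the solution matches exactly the condition that the image of $F$ lie on one of the exceptional rational curves where one coordinate is constant; so a \emph{non-trivial} solution yields a non-constant $F$ whose image is not contained in those lines.

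\emph{The heart of the argument} is to feed the explicit $2$-jet differential $\omega:=xyz\,\Phi$ into Theorem~A. By Lemma~4.3, for $n\geqslant 9$ the differential $\omega$ is a holomorphic $2$-jet differential on the compact surface $S_n$ that vanishes along an ample divisor (the hyperplane at infinity) and is not identically zero. Theorem~A then applies verbatim with $M=S_n$, $k=2$, and $D$ the hyperplane section, yielding $F^{*}\omega\equiv 0$ on $\mathbb{C}$. Next I would unwind what $F^{*}\omega=0$ says concretely: using the identity~(\ref{eqn:9}), $\omega=(xyz)^2 M_{xyz}$, and since $xyz\not\equiv 0$ along $F$ (the coordinate hyperplanes are among the excluded loci, or meet $S_n$ in the trivial lines), pulling back forces $F^{*}M_{xyz}=0$, i.e.\ the $3\times 3$ logarithmic Wronskian determinant of $(f,g,h)$ vanishes identically. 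Equivalently the rows $(f'/f,g'/g,h'/h)$ and its $D^2$-analogue are linearly dependent against $(1,1,1)$, which is an explicit second-order ODE constraint on the curve.

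The remaining step is to show that this identity forces $F$ to be one of the trivial solutions, contradicting non-triviality. Here I would integrate the vanishing Wronskian relation: the vanishing of $M_{xyz}$ expresses that the logarithmic derivatives $d\log x, d\log y, d\log z$ together with the modified second derivatives satisfy a rank condition equivalent to saying that $x^n, y^n, z^n$ (equivalently $f^n,g^n,h^n$) are linearly dependent functions summing to $1$, which, combined with the Fermat relation, pins down two of the three up to constant multiples of the third and constant values. Concretely one deduces that one of $f,g,h$ is constant and the other two are proportional, which is exactly the trivial form $(f,\omega_1 f,\omega_2)$ with $\omega_1^n=-1,\omega_2^n=1$. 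Since we assumed the solution non-trivial, this is a contradiction, proving Corollary~4.4 for $n\geqslant 9$.

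\emph{The main obstacle} I anticipate is the last integration step: extracting the precise trivial form from $F^{*}M_{xyz}\equiv 0$ requires care because the vanishing of a single $2$-jet determinant does not immediately linearize the original functions—one must argue that the ODE it encodes has, modulo the Fermat constraint and modulo the excluded coordinate loci, only the degenerate integral curves as solutions. This is where the geometry of $S_n$ (smoothness, and the fact that $\{x=0\}\cap\{y=0\}\cap\{z=0\}$ misses $S_n$, already noted in Lemma~4.3) must be used to rule out spurious branches, and where one may need to invoke again that the only entire holomorphic curves tangent to this distribution are the rational lines on the Fermat surface.
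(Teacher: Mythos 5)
Your overall strategy coincides with the paper's: pass to the holomorphic curve $F=[f,g,h,1]:\mathbb{C}\to S_n$, apply Theorem A to the $2$-jet differential $xyz\Phi$, which by Lemma 4.2 is holomorphic on $S_n$ and vanishes along the ample hyperplane section at infinity for $n\geqslant 9$, and conclude $F^*(xyz\Phi)=0$; then, provided the image does not lie in a coordinate plane, deduce $F^*M_{xyz}=0$ (the paper works with $F^*M_{xy}=0$, which is equivalent here by the identity $\Phi=(xy)M_{xy}/z^{n-1}=(xyz)M_{xyz}$). Up to that point the proposal is sound and matches the paper.

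The gap is in the final step. From $F^*M_{xy}=0$ you assert that one of $f,g,h$ must be constant and the other two proportional, but the vanishing of this $2$-jet determinant does not directly yield the trivial form. What it actually gives, after integrating the ODE $dx\,D^2y-dy\,D^2x=0$ (equivalently, the linear dependence of $f^n,g^n,h^n$ over $\mathbb{C}$ that you correctly identify), is only that the image of $F$ lies on an algebraic curve of the shape $y^n=k_1x^n+k_2$ inside $S_n$. A relation with both $k_1\neq 0$ and $k_2\neq 0$ is a priori compatible with all three functions being non-constant, so triviality does not follow formally from the dependence relation. The missing ingredient is the hyperbolicity of the non-degenerate integral curves: one must check, via the genus formula, that the curve cut out on $S_n$ by $x^n+y^n+z^n=1$ and $y^n=k_1x^n+k_2$ with $k_1k_2\neq 0$ has genus at least $2$, hence admits no non-constant holomorphic map from $\mathbb{C}$; only then is one forced into the degenerate cases $k_1=0$ or $k_2=0$, which are exactly the trivial families $(f,\omega_1 f,\omega_2)$ up to permutation. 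You flag this as the main obstacle, but your proposed resolution --- invoking again that the only entire curves tangent to the distribution are the rational lines --- is circular, since that is precisely what needs to be proved. The genus computation is how the paper closes the argument, and it needs to be supplied.
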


\begin{proof}  Consider the mapping $F=[f,g,h,1]: \mathbb{C}\rightarrow \mathbb{P}^3.$  By grouping the poles together, $F$ has a 
holomorphic representation $F=[a_1,a_2,a_3,a_4],$ where $a_i$ is an entire holomorphic function for each $i=1,2,3,4.$
Denote $(x_1,x_2,x_3)=(x,y,z)$ and $(f_1,f_2,f_3)=(f,g,h)$.   Hence $f_i=\frac{a_i}{a_4}$.
From definition, 
$F^*d^ix_j=\frac{d^if_j}{d\zeta^i}.$  In this way, $F^*\Phi$ is defined as well.\\

From  Lemma 4.2 and Theorem A, $F^*(xyz\Phi)=0$. Hence from (\ref{eqn:9}), unless $F(\mathbb{C})$ lies in a coordinate plane, we may assume that
$F^*M_{xy}=0$, since the former case can be handled easily.  This implies that $F(\mathbb{C})$ satisfies the differential
equation $M_{xy}=0$.  Hence
\begin{eqnarray}
&&dx(d^2y+\frac{n-1}y(dy)^2)-dy(d^2x+\frac{n-1}x(dx)^2)=0\nonumber  \\
&\Rightarrow&dxd^2y-dyd^2x=-(n-1)dxdyd\ln(\frac yx)\nonumber\\
&\Rightarrow&d(\frac{dy}{dx})=-(n-1)\frac{dy}{dx}\ln(\frac yx)\nonumber\\
&\Rightarrow&d\ln(\frac{dy}{dx})=-(n-1)\ln(\frac yx)\nonumber\\
&\Rightarrow&\frac{dy}{dx}=k_1(\frac yx)^{-(n-1)}\nonumber\\
&\Rightarrow&y^n=k_1x^n+k_2,\label{eqn:15}
\end{eqnarray}
where $k_1$ and $k_2$ are constants.  
Hence the image of $F$ is contained in the two equations (\ref{eqn:4}) and (\ref{eqn:15}).
One checks easily from the genus formula that unless $k_1=0$ or $k_2=0$, the genus of the curve cut out
by the two equations is at least $2$, which is hyperbolic. This will force $x$ and $y$ to be constant functions. Hence either $k_1=0$ or $k_2=0$.  In either case,
we conclude that the image of the curve lies in a rational curve of the form
$(f(t),\omega_1f(t), \omega_2,1)$ or by permutation of the indices, where $\omega_1, \omega_2 \in \mathbb{C}$ such that $\omega_1^n=-1$ and $\omega_2^n=1$. There only trivial solutions exist and we are done.

\end{proof} 

 

Consider  now the case of entire holomorphic solutions to equation (\ref{eqn:3}).  This is equivalent to existence of 
an entire holomorphic curve on the surface $S_n$ defined by $X^n+Y^n+Z^n=W^n$ avoiding the curve $W=0$.  To be consistent
with the discussions in the earlier sections, let us consider the equivalent problem of existence of entire holomorphic curve
on $S_n-C_Z$, where the curve $C_Z$ is defined by $\{Z=0\}$ on $S_n$, by switching the roles of $W$ and $Z$.  Hence we are looking for
the entire solutions $e,f,g$ to the equation
\begin{equation}\label{eqn:16} 
f^n+g^n+1=e^n
\end{equation}

\begin{lemma}
The $2$-jet differential $\frac{xy}z \Phi$ is a holomorphic log $2$-jet divisor with logarithmic poles along the divisor $C_Z$ of $M$ for $n\geqslant 6.$  Moreover, $\frac{xy}z \Phi$ vanishes along an ample divisor on $S_n$ for $n\geqslant 7$.
\end{lemma}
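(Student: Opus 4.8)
The plan is to extract both claims from the two factorizations of $\Phi$ provided by Lemma 4.1, namely $\frac{xy}z\Phi=(xy)^2M_{xyz}$ together with $\frac{xy}z\Phi=\frac1{z^2}(xyz\Phi)$, and to feed the second one into the computation already made for $xyz\Phi$ in Lemma 4.2. The outcome I am aiming for is the divisor identity $\mathrm{div}\!\big(\frac{xy}z\Phi\big)=(n-6)\,C_\infty-2\,C_Z$ on $S_n$, where $C_\infty=\{W=0\}\cap S_n$ is the hyperplane section at infinity. This identity records the orders along the two relevant divisors; that the pole along $C_Z$ is genuinely logarithmic (rather than merely of order two) is then read off from the explicit form $(xy)^2M_{xyz}$, and the zero along the ample $C_\infty$ appears precisely once $n\geqslant 7$.

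First I would settle the affine part $\{W\neq 0\}$ directly from $\frac{xy}z\Phi=(xy)^2M_{xyz}$. Writing $\frac{D^2s}s=d^2\log s+n(d\log s)^2$ for $s\in\{x,y,z\}$, each column of the determinant $M_{xyz}$ is a polynomial in $d\log s$ and $d^2\log s$, so on the affine part the only singularities of $M_{xyz}$ lie along $\{x=0\}$, $\{y=0\}$ and $\{z=0\}$, and in each variable the pole order coming from its column is at most $2$. Since $S_n$ is smooth, $S_n\cap\{x=y=z=0\}=\varnothing$, so near $\{x=0\}$ and $\{y=0\}$ the holomorphic factor $(xy)^2$ kills these poles completely, while near $\{z=0\}$ it is holomorphic and non-vanishing and the $z$-dependence stays a polynomial in $d\log z$ and $d^2\log z$ with holomorphic coefficients. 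Hence on the affine part $\frac{xy}z\Phi$ is holomorphic away from $C_Z$ and has at worst a logarithmic pole along $C_Z$, which is exactly the log $2$-jet condition.

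Next I would pass to the hyperplane at infinity. Here I use $\frac{xy}z\Phi=z^{-2}(xyz\Phi)$ together with $\mathrm{div}(z)=C_Z-C_\infty$. Lemma 4.2 shows that $xyz\Phi$ is holomorphic on the affine part and has pole order $8-n$ along $C_\infty$, i.e. $\mathrm{ord}_{C_\infty}(xyz\Phi)=n-8$; since $\mathrm{ord}_{C_\infty}(z^{-2})=2$, I obtain $\mathrm{ord}_{C_\infty}\!\big(\frac{xy}z\Phi\big)=n-6$. Thus $\frac{xy}z\Phi$ is holomorphic along $C_\infty$ for $n\geqslant 6$ and vanishes there for $n\geqslant 7$; as $C_\infty$ is a hyperplane section of $S_n\subset\mathbb{P}^3$ it is ample, which gives the second assertion. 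Together with the affine step this yields the divisor identity above and hence the lemma.

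The step I expect to cost the most work is the behaviour at the finitely many corner points $C_Z\cap C_\infty=\{Z=W=0\}\cap S_n$, where the logarithmic divisor meets the ample one and a naive pole count is misleading. Working in the chart $X\neq 0$ with coordinates $(v,w)=(Z/X,W/X)$ and $u=Y/X$ determined by $u^n=w^n-v^n-1$, the surface relation forces the $dw$-component of $du$ to vanish to order $n-1$ along $\{w=0\}$; combined with the Wronskian-type cancellation of the leading $w^{-5}$ terms in $dy\,d^2z-dz\,d^2y$ and in $(n-1)\,dy\,dz(d\log z-d\log y)$ already exploited in Lemma 4.2, this pins the order along $C_\infty$ at $n-6$ rather than the apparent larger value. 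Concretely, I would check that near such a point $\frac{xy}z\Phi=\frac{u\,w^{n-2}}{v}(dy\,D^2z-dz\,D^2y)$ has leading term a nonzero multiple of $u^2w^{\,n-6}(dw)^2\,d\log v$, exhibiting both $\mathrm{ord}_{C_\infty}=n-6$ and a simple logarithmic pole along $C_Z$, in agreement with the global divisor. This confirms that $\frac{xy}z\Phi$ is a log $2$-jet differential with poles only along $C_Z$ for $n\geqslant 6$, vanishing on the ample $C_\infty$ for $n\geqslant 7$.
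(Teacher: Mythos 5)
Your argument is correct and follows essentially the paper's own route: the log-pole structure on the affine part is read off from the determinantal identities of Lemma 4.1 (the paper uses the representations in (4.5)--(4.7) where you use the single equivalent expression $(xy)^2M_{xyz}$), and the order $n-6$ along the hyperplane at infinity is obtained exactly as you do, by combining the pole order $8-n$ of $xyz\Phi$ computed in Lemma 4.2 with the order-$2$ zero of $z^{-2}$ there. Your additional local check at the corner points $C_Z\cap C_\infty$ is a welcome refinement that the paper leaves implicit, but it is not a different method.
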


\begin{proof} From (\ref{eqn:5}) and (\ref{eqn:6}), we see that $\frac{xy}z \Phi$ is holomorphic at $x=0$ and $y=0$, but has a log pole along $z=0$.  Now from
the last paragraph in the proof of Lemma 4.2, we see that the pole of $\frac{xy}z \Phi$ has order $6-n$ at $\infty$ corresponding
to $W=0$.  Hence we conclude that on $S_n$, the  $2$-jet differential $\frac{xy}z \Phi$ is a holomorphic everywhere except a log pole
along $D_Z$ if $n\geqslant 6$.  Moreover, it vanishes along the ample divisor given by $W=0$ when $n \geqslant 7$.
\end{proof}

We immediately have the following corollary.

\begin{corollary} (\cite{Ha85})
There is no non-trivial entire solution $e(\zeta), f(\zeta), g(\zeta)$ to the equation (\ref{eqn:16}) for $n\geqslant 7$.
\end{corollary}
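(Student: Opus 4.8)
The plan is to run the argument of Corollary 4.4, but with the holomorphic jet differential $xyz\Phi$ and Theorem A replaced by the log $2$-jet differential $\frac{xy}{z}\Phi$ of Lemma 4.5 and Theorem B. The conceptual point is that $\frac{xy}{z}\Phi$ already vanishes on an ample divisor for $n\geqslant 7$ (rather than $n\geqslant 9$), at the cost of a logarithmic pole along $C_Z$; and the entire --- as opposed to merely meromorphic --- hypothesis is exactly what guarantees that the associated curve avoids $C_Z$, so that Theorem B may legitimately be applied.

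Concretely, I would encode an entire solution $(e,f,g)$ of (\ref{eqn:16}) as the holomorphic curve $F=[f:g:1:e]\colon\mathbb{C}\to S_n$. Since the third homogeneous coordinate is the constant $1$, the image of $F$ avoids $C_Z=\{Z=0\}$ (in the chart $W\neq0$ this reads $z=Z/W=1/e\neq0$, which uses precisely that $e$ is entire, hence pole-free); the curve may, harmlessly, meet the hyperplane $\{W=0\}$ at the zeros of $e$. Setting $F^*d^ix_j=d^if_j/d\zeta^i$ as in Corollary 4.4 makes $F^*\Phi$ and $F^*\bigl(\frac{xy}{z}\Phi\bigr)$ well defined, $\Phi$ being an intrinsic meromorphic $2$-jet differential on $S_n$; along $W\neq0$ one has $x=f/e$, $y=g/e$, $z=1/e$. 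By Lemma 4.5 the differential $\omega:=\frac{xy}{z}\Phi$ is holomorphic on $S_n$ away from a log pole along $C_Z$, is not identically zero, and vanishes on the ample divisor $\{W=0\}$ for $n\geqslant 7$; hence Theorem B, with $M=S_n$, $D=\{W=0\}$ and $Z_1=C_Z$, forces $F^*\omega=0$.

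To finish I would extract the differential equation. Off the coordinate hyperplanes the scalar $xy/z$ is a nonzero meromorphic function, so $F^*\omega=0$ yields $F^*\Phi=0$, and the factorizations in (\ref{eqn:9}) then force the minors $M_{xy},M_{yz},M_{zx}$ all to pull back to zero. Taking $M_{xy}=0$ and integrating through the same chain of implications as in Corollary 4.4 gives $y^n=k_1x^n+k_2$ for constants $k_1,k_2$, so that the image of $F$ lies on the curve cut out on $S_n$ by (\ref{eqn:4}) together with this relation --- the very curve studied at the end of Corollary 4.4. Its genus is $\geqslant 2$ unless $k_1=0$ or $k_2=0$, so in the generic case hyperbolicity forces $f,g,e$ to be constant; the two exceptional cases are precisely the rational curves $[f(t):\omega_1 f(t):1:\omega_2]$ with $\omega_1^n=-1$, $\omega_2^n=1$ (up to permutation of the indices), i.e. the trivial solutions of (\ref{eqn:16}). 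The remaining degenerate possibility, that $F(\mathbb{C})$ lies in a coordinate plane, is handled separately: $\{Z=0\}=C_Z$ is excluded since $F$ avoids it, while each of $\{X=0\},\{Y=0\},\{W=0\}$ forces one of $f,g,e$ to vanish identically and reduces (\ref{eqn:16}) to a two-term equation such as $g^n+1=e^n$, which has no non-constant entire solution for $n\geqslant 3$ by Iyer's theorem.

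The step I expect to require the most care is the bookkeeping that verifies these special configurations are all genuinely trivial rather than hiding an overlooked non-trivial family: the values $k_1=0$ and $k_2=0$ (and, after passing through the substitution $x=f/e$, $y=g/e$, the borderline case $k_2=1$, which instead forces $f^n$, and hence $f$, to be constant) together with the three coordinate-plane cases must exhaust every way a non-constant $F$ could survive, and each must correspond to a solution of the form $(f,\omega_1 f,\omega_2)$ up to permutation. By contrast the analytic core, namely the vanishing $F^*\omega=0$, is immediate once Lemma 4.5 and Theorem B are granted, and the genus estimate is the routine computation already invoked in Corollary 4.4.
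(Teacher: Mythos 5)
Your proposal is correct and takes essentially the same route as the paper: the paper's own proof of this corollary is precisely the one-line reduction to Lemma 4.4 (your ``Lemma 4.5''), Theorem B applied to the log $2$-jet differential $\frac{xy}{z}\Phi$ vanishing on the ample divisor $\{W=0\}$, and the integration and genus argument already carried out in Corollary 4.3, all of which you have spelled out faithfully. The only divergences are cosmetic --- a shift in numbering and some explicit bookkeeping of the degenerate cases ($k_1=0$, $k_2=0$, coordinate planes) that the paper leaves implicit.
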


\begin{proof} This follows from Lemma 4.4 and Theorem B (the Schwarz Lemma for log-jet sections as stated in Theorem 3a in \cite{SY97}) and the arguments used in the proof of Corollary 4.3.
\end{proof} 

\begin{remark}
Switching the roles of $Z$ and $W$ in the above arguments, we may still study on the affine part solutions to
$$f^n+g^n+h^n=1$$
for $n\geqslant 6$ and consider the two jet of the form 
$xyz\Phi$.  In such case, the argument of the above shows that  $xyz\Phi$ has log poles at $\infty$ so that  the arguments of the above
still forces $F^*\Phi_{xyz}=0$ for $n\geqslant 7$.  Under the transformation $(x,y,z,1)\rightarrow (x,y,1,w)$ in two different standard affine coordinates for ${\mathbb P}^3$,
the jet differential $xyz\Phi_{xyz}$ corresponds $\frac{xy}w\Phi_{xyw}$, the one given in Lemma 4.4, where $\Phi_{xyz}$ is $\Phi$ discussed
earlier in affine coordinates $(x,y,z,1)$ and $\Phi_{xyw}$ is the similar expression in affine coordinate $(x,y,1,w)$.

\end{remark}

\section{On $T(r,F^*(xyz\Phi))$}


Let us recall some standard notations from value distribution theory (see for example, \cite{Ru01} and \cite{Siu95}).  In the following, let $g$ be a function on $\mathbb{C}$,
and $\eta$ be a one form on $\mathbb{C}$.  We define
\begin{eqnarray*}
\mathcal{A}_r(g)&=&\frac1{2\pi}\int_0^{2\pi}g(re^{i\theta})d\theta\\
\mathcal{I}_r(\eta)&=&\int_0^r\frac{d\rho}\rho\int_{|z|<\rho}\eta.
\end{eqnarray*}

Let $F:\mathbb{C}\rightarrow \mathbb{P}^n$ be a holomorphic mapping to $\mathbb{P}^n$ and $\omega$
be the K\"ahler form of the Fubini-Study metric on $\mathbb{P}^n$.  Let $D$ be a hypersurface on $\mathbb{P}^n$.  Let $\infty$
denote the hypersurface at $\infty.$  We define

\begin{eqnarray*}
T(r,F)&=&\mathcal{I}_r(F^*\omega)\\
N(r,F,D)&=&\mathcal{I}_r(F^*D),
\end{eqnarray*}
where the latter is interpreted as a current.

A meromorphic function $f$ can be considered as a mapping $f:\mathbb{C} \rightarrow \mathbb{P}^1$.  In such case,
$N(r,f,\infty)$ or $N(r,f,0)$ are defined as above when $\infty$ and $0$ are regarded as divisors on $\mathbb{P}^1$.\\

Again we consider the mapping $F=[f,g,h,1]: \mathbb{C}\rightarrow \mathbb{P}^3$ as in the proof of Corollary 4.3 and adopt the notation there.

 \begin{proposition} (a). Let $n=6$.  Assume that $f, g, h$ are entire and $F^*M_{xyz}\neq0$.  Then
$T(r,F^*(xyz\Phi))=\mathcal{O}(\log(T(r,F)))$ for $r$ outside a set $E$ of finite measure with respect to $\frac{dr}{r}$. \\
(b).  Let $n=8$ and assume that $f, g, h$ are meromorphic and $F^*M_{xyz}\neq0$.  Then the same conclusion holds.
\end{proposition}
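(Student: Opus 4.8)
The plan is to deduce the estimate from the lemma on the logarithmic derivative. The difficulty is that $F^*(xyz\Phi)=(fgh)^2\,F^*M_{xyz}$ carries the large weight factor $(fgh)^2$: the naive splitting $m(r,(fgh)^2 F^*M_{xyz})\le m(r,(fgh)^2)+m(r,F^*M_{xyz})$ only yields $\mathcal O(T(r,F))$, so the cancellation between $(fgh)^2$ and $F^*M_{xyz}$ must be produced from the Fermat relation $f^n+g^n+h^n=1$. First I would note that $F^*(xyz\Phi)$ is entire, so that $T(r,F^*(xyz\Phi))=m(r,F^*(xyz\Phi))$ and only the proximity function needs to be controlled. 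In case (b), $n=8$, this is immediate from Lemma 4.2: $xyz\Phi$ is a holomorphic $2$-jet differential on all of $S_8$, and $F$ is a holomorphic map to $\mathbb P^3$ once the common poles of $f,g,h$ are cleared as in Corollary 4.3, so its pullback is holomorphic on $\mathbb C$. In case (a), $n=6$ with $f,g,h$ entire, Lemma 4.2 still gives holomorphy of $xyz\Phi$ on the affine part of $S_6$, which $F$ never leaves; hence $N(r,F^*(xyz\Phi))=0$ again.

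Next I would linearise by passing to $P=f^n$, $Q=g^n$, $R=h^n$, which satisfy $P+Q+R\equiv1$. Since $D^2x=d^2x+\tfrac{n-1}{x}(dx)^2$ is built exactly so that $\tfrac{D^2x}{x}$ collapses to $\tfrac1n\tfrac{(x^n)''}{x^n}$, a direct computation gives $F^*\tfrac{dx}{x}=\tfrac1n\tfrac{P'}{P}$ and $F^*\tfrac{D^2x}{x}=\tfrac1n\tfrac{P''}{P}$, so that
\[
F^*(xyz\Phi)=\frac{(fgh)^2}{n^2}\,\frac{\mathcal W(P,Q,R)}{PQR},\qquad
\mathcal W(P,Q,R)=\det\!\begin{pmatrix}P&Q&R\\P'&Q'&R'\\P''&Q''&R''\end{pmatrix}.
\]
Here the constraint enters decisively: because $P+Q+R\equiv1$ the three columns of the Wronskian matrix sum to $(1,0,0)^{t}$, whence
\[
\mathcal W(P,Q,R)=P'Q''-P''Q'=Q'R''-Q''R'=R'P''-R''P'.
\]
Using $PQR=(fgh)^n$ together with these reductions, I would rewrite $F^*(xyz\Phi)$ in the three cyclically symmetric forms
\[
\frac{f^2g^2}{n^2 h^{n-2}}\,E_h=\frac{g^2h^2}{n^2 f^{n-2}}\,E_f=\frac{h^2f^2}{n^2 g^{n-2}}\,E_g,
\]
where $E_h=\dfrac{P'Q''-P''Q'}{PQ}=\dfrac{P'}{P}\dfrac{Q''}{Q}-\dfrac{P''}{P}\dfrac{Q'}{Q}$ and $E_f,E_g$ are obtained by cyclic permutation. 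Each $E_\bullet$ is a polynomial in the logarithmic derivatives $P'/P,P''/P,\dots$, so the lemma on the logarithmic derivative, together with $T(r,P)=nT(r,f)+O(1)\le nT(r,F)+O(1)$, gives $m(r,E_\bullet)=\mathcal O(\log T(r,F))$ for $r$ outside a set of finite measure.

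Finally I would estimate $m(r,F^*(xyz\Phi))$ by choosing, pointwise on $|z|=r$, the most favourable of the three forms. Partition the circle into the sets $S_f,S_g,S_h$ on which $|f|,|g|,|h|$ respectively is largest. The relation forces $1=|f^n+g^n+h^n|\le 3\,\max(|f|,|g|,|h|)^n$, so on $S_h$ one has $|h|\ge 3^{-1/n}$ and $|f|,|g|\le|h|$; consequently the prefactor obeys $\bigl|f^2g^2/h^{n-2}\bigr|\le|h|^{6-n}\le 3^{(n-6)/n}=\mathcal O(1)$ for every $n\ge6$, and therefore $\log^+|F^*(xyz\Phi)|\le\log^+|E_h|+O(1)$ on $S_h$. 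The symmetric bounds hold on $S_f$ and $S_g$, so integrating,
\[
m(r,F^*(xyz\Phi))\le m(r,E_f)+m(r,E_g)+m(r,E_h)+O(1)=\mathcal O(\log T(r,F))
\]
for $r$ outside a set $E$ of finite $\tfrac{dr}{r}$-measure; since $N(r,F^*(xyz\Phi))=0$ this is also the bound for $T(r,F^*(xyz\Phi))$, which is the assertion. The same computation settles (a) and (b) at once, the only change in (b) being that $f,g,h$ may have poles, which occupy a set of measure zero on each circle and affect neither the holomorphy of $F^*(xyz\Phi)$ nor the proximity estimate. (The hypothesis $F^*M_{xyz}\neq0$ is not needed for the estimate itself, which holds trivially when the pullback vanishes; it merely records the nondegenerate case relevant to Section 6.) The step I expect to require the most care is precisely the collapse of the $3\times3$ Wronskian via $P+Q+R\equiv1$: this is what removes two of the three powers of $fgh$ and converts an a priori $\mathcal O(T(r,F))$ bound into $\mathcal O(\log T(r,F))$, the surviving prefactor $f^2g^2/h^{n-2}$ then being tamed only through the elementary but essential lower bound $\max(|f|,|g|,|h|)\ge 3^{-1/n}$ furnished by the Fermat relation.
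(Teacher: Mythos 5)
Your proof is correct, but it follows a genuinely different route from the paper's. The paper works intrinsically on the surface: it covers $S_n$ by charts adapted to the divisor at infinity, writes $\Omega=xyz\Phi$ as a polynomial in $dx$, $d\log w$ and $d^2x\,d\log w-dx\,d^2\log w$ with meromorphic coefficients, and estimates $\mathcal{A}_r(\log^+|F^*\Omega|)$ via a partition of unity, concavity of $\log$, and the Logarithmic Derivative and Calculus Lemmas in Siu's jet-differential formalism, finishing with the First Main Theorem and $N(r,F^*\Omega,\infty)=0$. You instead compute $F^*(xyz\Phi)$ globally on $\mathbb{C}$: the identification $F^*(xyz\Phi)=\frac{(fgh)^2}{n^2}\,\mathcal{W}(P,Q,R)/(PQR)$ with $P=f^n$, $Q=g^n$, $R=h^n$, the collapse of the Wronskian to $2\times 2$ minors via $P+Q+R\equiv 1$ (this is in essence the Cartan--Ishizaki identity $W(f^n,g^n,h^n)=\mathrm{const}\cdot p\cdot(fgh)^{n-2}$ that the authors record only later, in Remark 6.3), and the max-partition of the circle with the elementary bound $\max(|f|,|g|,|h|)\geqslant 3^{-1/n}$ to tame the prefactor $f^2g^2/h^{n-2}$. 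Both arguments ultimately rest on the same two pillars -- the Logarithmic Derivative Lemma and the vanishing of the counting function coming from holomorphy of the jet differential (Lemma 4.2 for $n=8$, holomorphy on the affine part for $n=6$ with $f,g,h$ entire) -- and both share the same minor imprecision that the LDL actually yields $\mathcal{O}(\log T(r,F)+\log r)$, which is what is really needed and used as $S(r)$ in Section 6. What the paper's method buys is generality: it applies verbatim to any holomorphic or log-pole jet differential without knowing its explicit form. What yours buys is transparency and economy: no charts or partitions of unity, and the threshold $n\geqslant 6$ appears concretely as the boundedness of $|h|^{6-n}$, mirroring the geometric condition in Lemmas 4.2 and 4.4. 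Your parenthetical observation that the hypothesis $F^*M_{xyz}\neq 0$ is not needed for the estimate is also accurate.
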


\begin{proof}  (a). As in Remark 2 of Section 4, $\Omega:=xyz\Phi$ is a special holomorphic log-2-jet differential on $S_n$ with log-divisor at $D$ which is the divisor
at $\infty$.  Let us cover $S_n$ by a finite number of
open sets $U_\alpha$, $\alpha=1,\dots,N$.  We choose local holomorphic 
coordinates $(x,w)$ on $U_\alpha$ so that $D$ is defined by $w=0$ if $D\cap U_\alpha\neq\emptyset$.   $\Omega$ is then a polynomial expression in 
$$dx,d\log w, d^2xd\log w-dx d^2\log w$$
with coefficients which are meromorphic functions.     Hence $\Omega=\sum_{i,j,k}a_{ijk}dx^id\log w^j(d^2xd\log w-dx d^2\log w)^k$ on $U_\alpha$,
where $i+j+3k=\ell$ is a constant.
Note that the expressions make sense even if $D\cap U_\alpha=\emptyset$.
 The proximity term ${\mathcal A}_r(\log^+|F^*\Omega|)$ refers to the proximity integral of the sum of the pull back of all the coefficients.
 As $F$ is entire,
we still have $N(r,F^*\Omega,\infty)=0$.  
Denote by $\omega$ the K\"ahler form on $S_n$ induced from the Fubini-Study metric on ${\mathbb P}^3$.  By considering partition of
unity $\{\rho_\alpha\}$ subordinated to the collection of open sets $\{U_\alpha\}$ and writing $\Omega=\sum_{\alpha=1}^N\rho_\alpha\Omega$, we have
\begin{eqnarray*}
{\mathcal A}_r(\log^+|F^*\Omega|)&\leqslant &{\mathcal A}_r(\log^+(N\max_{1\leqslant \alpha\leqslant N} (\rho_\alpha |F^*\Omega|)))\\
&\leqslant &\sum_{\alpha=1}^N {\mathcal A}_r(\log^+(\rho_{\alpha} |F^*\Omega|)+\log N))\\
\end{eqnarray*}

\begin{eqnarray*}
{\mathcal A}_r(\log^+(\rho_{\alpha} |F^*\Omega|))&=&{\mathcal A}_r(\log^+|F^*(\sum_{i,j,k}a_{ijk}(dx)^i(d\log w)^j(d^2xd\log w-dx d^2\log w)^k|))\\
&\leqslant&{\mathcal A}_r(\log^+(1+c_1\sum_{j+3k\leqslant \ell}(|F|^2+|F^*(d\log w)^j|+|F^*(d^2xd\log w-dx d^2\log w)^k)|)\\
&\leqslant&\log(c_2+c_3\mathcal A}_r(|F|^2)+c_4{\sum_{j\leqslant 2, jm\leqslant\ell}{\mathcal A}_r(|F^*(d^j\log w)^m|)\\
&\leqslant&\log(c_2+c_4r^\epsilon({\mathcal I}_r(F^*\omega))^{1+\delta})\Vert_{\epsilon,\delta}\\
&=&{\mathcal O}(\log T(r,F))\Vert_{\epsilon,\delta},
\end{eqnarray*}
where we used concavity of $\log$ in the third line, the Logarithmic Derivative Lemma and Calculus Lemma (cf. \cite{Siu95}, Lemma 1.1.3) in the fourth line.
For example,
\begin{eqnarray*}{\mathcal A}_r(\log^+|F^*(dw)|^2)&\leqslant& {\mathcal A}_r(\log^+|F^*(d\log w)|^2)+ {\mathcal A}_r(\log^+|F^*(w)|^2)\\
&\leqslant& {\mathcal A}_r(\log^+|F^*(d\log w)|^2)+ {\mathcal A}_r(\log^+|F|^2)
\end{eqnarray*}
and so forth.
Part (a) of the lemma follows from the First Main Theorem and the fact that the counting function  $N(r,F^*\Omega,\infty)=0$.

The proof of (b) is exactly the same, replacing log jet differentials by the usual holomorphic jet differentials.
\end{proof}


\section{Proof of Theorem 1.1 (n=6) and Theorem 1.2 (n=8) }

\begin{proof} We first prove Theorem 1.2 ($n=8$). Suppose there are meromorphic functions $f,g,h$ satisfying
$$f^8+g^8+h^8=1.$$
If $F^*M_{xyz}=0$, then we will have (4.8) for $n=8$ and we are done.
Now if $F^*M_{xyz}\neq0$, then we know from (4.5) of Lemma 4.1 and Proposition 5.1(b) that 
$$p=F^*(\frac{x^2y^2}{z^6}M_{xy})=F^*(\frac{xy}{z^6}(dxD^2y-dyD^2x))$$
is a small function, i.e. $T(r,p)= \mathcal{O}(\log(T(r,F)))$. The condition $F^*M_{xyz}\neq0$ also implies that all $f, g$ and $h$ are non-constant.\\

\noindent
From p.427 of \cite{Siu95}, we know that 
$$\frac13(T(r,f)+T(r,g)+T(r,h)) + \mathcal{O}(1) \le T(r,F) \le T(r,f)+T(r,g)+T(r,h) + \mathcal{O}(1).$$
Hence, $T(r,p)=o(T(r,f)+T(r,g)+T(r,h))$.
From now on, we will denote by $S(r)$ any quantity that satisfies $S(r)=o(T(r,f)+T(r,g)+T(r,h))$, $r \to \infty, r \notin E$, here $E$ is a set of finite measure with respect to $\frac{dr}{r}$. \\

From p. 82 of \cite{I02}, we know for $n=8$ that 
$$T(r,f)+S(r)=T(r,g)+S(r)=T(r,h)+S(r).$$

To prove Theorem 1.2, we need the following 

\begin{lemma}
Assume that $n=8$ and $(f,g,h)$ is a meromorphic solution of (\ref{eqn:3}). 
Let $p=F^*(xyz\Phi)$ and $Z(p)$ be the zero set of $p$.  Then the following statements hold.\\
(a). None of any two of $f,g,h$ have common zero on $\mathbb{C}-Z(p)$.\\
(b). $f,g,h$ can only have simple zeros on $\mathbb{C}-Z(p)$.\\
(c). $f, g, h$ share common poles on $\mathbb{C}-Z(p)$, and the pole order has to be $1$.\\
(d).  Let $f_i$ be one of $f, g, h$.  Then $m(r,f_i^{(k)})=S(r)$ for all $k \ge 0$ and $i$
and $m(r,\frac1{f_i})=S(r)$
\end{lemma}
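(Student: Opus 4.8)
The plan is to read off (a)--(c) from local Laurent expansions at a single point and to obtain (d) from proximity estimates, working throughout with the three cyclic expressions
\begin{equation*}
p=\frac{g^2h^2}{f^6}\,F^*M_{yz}=\frac{h^2f^2}{g^6}\,F^*M_{zx}=\frac{f^2g^2}{h^6}\,F^*M_{xy}
\end{equation*}
coming from (4.9), together with $p=(fgh)^2F^*M_{xyz}$. Two facts are used repeatedly. First, $xyz\Phi$ is a genuine holomorphic $2$-jet differential on $S_8$ (Lemma 4.2), so $p$ is entire and its only exceptional points are its zeros; moreover $T(r,p)=S(r)$, hence $N(r,1/p)=S(r)$ and $Z(p)$ is negligible. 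Second, writing $L_f:=f'/f$ one has $F^*M_{xy}=L_fL_g'-L_gL_f'+8L_fL_g(L_g-L_f)$ and its cyclic variants, and at a zero (resp.\ pole) of order $a$ (resp.\ $P$) the quantity $D^2f=f''+\tfrac7f(f')^2$ has leading term $c\,a(8a-1)t^{a-2}$ (resp.\ $c\,P(8P+1)t^{-P-2}$). The governing principle is: \emph{whenever the local configuration is not one of the permitted ones, additivity of order in the product formula $\mathrm{ord}_{\zeta_0}p=\mathrm{ord}_{\zeta_0}(\tfrac{f^2g^2}{h^6})+\mathrm{ord}_{\zeta_0}(F^*M_{xy})$ (or a cyclic variant) forces $\mathrm{ord}_{\zeta_0}p\ge 1$, so $\zeta_0\in Z(p)$.}

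For (a) and (b) I work at a zero. At a common zero of $f,g$ the equation gives $h(\zeta_0)^8=1$, so $h$ is finite and nonzero; in $p=\tfrac{f^2g^2}{h^6}F^*M_{xy}$ the $t^{-3}$ terms of $L_fL_g'-L_gL_f'$ cancel while the cubic term survives, giving $\mathrm{ord}(F^*M_{xy})\ge -3$ and hence $\mathrm{ord}_{\zeta_0}p\ge 2(a+b)-3\ge 1$; this is (a). Granting (a) and (c), at a zero of $f$ of order $a$ the functions $g,h$ are finite and nonzero, so only one logarithmic derivative is singular, whence $\mathrm{ord}(F^*M_{xy})\ge -2$ and $\mathrm{ord}_{\zeta_0}p\ge 2a-2$; for $a\ge 2$ this is $\ge 2$, giving (b).

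For (c) I analyse a pole $\zeta_0$ of $F$. Comparing pole orders in $f^8+g^8+h^8=1$, the largest pole order among $f,g,h$ must be attained at least twice, since a single dominant eighth-power pole cannot cancel. If the three orders are not all equal, say $f,g$ have order $P$ strictly larger than that of $h$, then the principal parts of $f^8$ and $g^8$ must cancel down to the level of $h^8$; this forces $f/g$ to approach a root of $w^8=-1$ to high order, equivalently $L_f-L_g=(\log(f/g))'$ to vanish to high order, so that $F^*M_{xy}$, and therefore $p$, vanishes at $\zeta_0$ and $\zeta_0\in Z(p)$. Hence off $Z(p)$ the pole is shared with a common order $P$, and then $\mathrm{ord}(\tfrac{f^2g^2}{h^6})=2P$ with $\mathrm{ord}(F^*M_{xy})\ge -2$ give $\mathrm{ord}_{\zeta_0}p\ge 2P-2$, positive unless $P=1$. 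This pole step, with its hidden high-order cancellations, is the main obstacle: it is exactly where one must control the subleading Laurent coefficients rather than the leading ones alone.

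Finally, (d) is purely Nevanlinna-theoretic. Since $F^*M_{yz}$ is a polynomial in $L_g,L_h$ and their derivatives, the logarithmic derivative lemma gives $m(r,F^*M_{yz})=S(r)$; combined with $\tfrac{f^6}{g^2h^2}=F^*M_{yz}/p$ and $m(r,1/p)=S(r)$ this yields $m(r,\tfrac{f^6}{g^2h^2})=S(r)$ and, cyclically, the analogues. Multiplying the three gives $m(r,fgh)=S(r)$, and from $f^8=\tfrac{f^6}{g^2h^2}\,(fgh)^2$ we get $m(r,f)=S(r)$, so $m(r,f_i)=S(r)$ for each $i$. For the zeros, dividing the defining equation by $(fgh)^2$ gives
\begin{equation*}
\frac{1}{(fgh)^2}=\frac{f^6}{g^2h^2}+\frac{g^6}{h^2f^2}+\frac{h^6}{f^2g^2},
\end{equation*}
whence $m(r,1/(fgh))=S(r)$, and then $\tfrac1f=gh\cdot\tfrac{1}{fgh}$ gives $m(r,1/f_i)=S(r)$. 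The remaining estimates $m(r,f_i^{(k)})=S(r)$ follow from $m(r,f_i^{(k)})\le m(r,f_i^{(k)}/f_i)+m(r,f_i)$ and the logarithmic derivative lemma.
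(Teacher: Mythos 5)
Your argument is correct, but it runs on a different engine from the paper's. The paper first integrates the relation $(\ln(g'/f'))'=-7(\ln(g/f))'+p\,h^{6}/(fgf'g')$ to obtain $\frac{(g^8)'}{(f^8)'}=A\exp\bigl(\int p\,h^{6}/(f'g'fg)\,d\zeta\bigr)$, and then derives (a)--(c) in one stroke from the observation that the left-hand side is meromorphic, so the integrand may have at most a simple pole at any point off $Z(p)$: each forbidden configuration (common zero, multiple zero, non-shared or higher-order pole) makes $p\,h^{6}/(f'g'fg)$ have a pole of order at least two, hence an essential singularity of the exponential. You instead work directly with the entire function $p=F^{*}(xyz\Phi)$ through its three cyclic determinant representations and show by Laurent-order bookkeeping that every forbidden configuration forces $\mathrm{ord}_{\zeta_0}p\ge 1$. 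The two mechanisms are genuinely different even though they bound essentially the same local quantity. What your route buys: part (d) becomes self-contained (you rederive $m(r,f_i)=S(r)$ and $m(r,1/f_i)=S(r)$ from $f^{6}/(g^{2}h^{2})=F^{*}M_{yz}/p$ and its cyclic mates plus the logarithmic derivative lemma, where the paper simply cites Ishizaki), and the zero cases (a), (b) are immediate since the prefactor $f^{2}g^{2}/h^{6}$ vanishes to high order while $F^{*}M_{xy}$ has order at worst $-3$. What it costs: in (c) the subcase where the maximal pole order $P$ is attained by exactly two of the functions requires the quantitative cancellation $f/g-\omega=O(t^{8(P-R)})$, giving $\mathrm{ord}(F^{*}M_{xy})\ge 8(P-R)-3$ against $\mathrm{ord}(f^{2}g^{2}/h^{6})=-4P+6R$, whence $\mathrm{ord}\,p\ge 4P-2R-3\ge 1$; you correctly flag this as the delicate step and the estimate does close, but the paper's essential-singularity criterion disposes of it with no cancellation analysis at all, since there only the pole order of $h^{6}/(fgf'g')$ matters. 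You should also record explicitly that the subcase where $h$ vanishes at a common pole of $f,g$ is handled by switching to the representation $p=\frac{g^{2}h^{2}}{f^{6}}F^{*}M_{yz}$, where all orders are nonnegative except the $-3$ from the determinant; your phrase ``or a cyclic variant'' covers this but it deserves a sentence.
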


\begin{proof}
We first notice that following the argument used to get (4.8), we have
\begin{equation}\label{eqn:i1} 
(\ln(\frac{g'}{f'}))'=-7(\ln(\frac gf))'+p\frac{h^6}{fgf'g'}.
\end{equation}

Hence, \begin{equation}\label{eqn:i2} 
\frac{(g^8)'}{(f^8)'}=Ae^{\int p\frac{h^6}{f'g'fg}d\zeta}
\end{equation}
for some constant $A$.

For (a), suppose $f, g$ have a common zero at $0$ apart from $Z(p)$, the zero set of $p$. Then $h(0)\neq0$ from equation (\ref{eqn:3}).
It follows that $p(\zeta)\frac{h^{6}}{f'g'fg}$ has a pole of order at least two at $\zeta=0$.  The right hand side of (\ref{eqn:i2}) then has an essential singularity, contradicting the
fact that the left hand side is a meromorphic function.

For (b), if $f(0)=0$, it follows that $h(0)\neq0$ and $g(0)\neq0$.  Hence if $f$ has a zero of order at least $2$ at a point $\zeta\in \mathbb{C}-Z(p)$, the right hand side of
(\ref{eqn:i2}) would give rise to an essential singularity.  Again we reach a contradiction.

For (c), let $\zeta=0$ be a pole of one of $f, g, h$.  Without loss of generality, we may assume that the pole order of $h$ at $\zeta=0$ is the largest among $f, g, h$.
Then unless the order of $h$ at $0$ is $1$ and both $f$ and $g$ have pole order precisely $1$ at $0$, the expression $p(\zeta)\frac{h^{6}}{f'g'fg}$ would have a pole
of order greater than $1$ at $0$.  Again, the right hand side of (\ref{eqn:i2}) has an essential singularity and we reach a contradiction.

For (d), we notice from p.82 of \cite{I02} that $m(f_i), m(\frac1{f_i})=S(r)$.  Hence $m(r,f')\leqslant m(r,\frac{f'}f)+m(r,f)=S(r)$ from Logarithmic Derivative
Lemma.  From induction, it follows that $m(r,f^{(k)})=S(r)$.
\end{proof}

Now from (\ref{eqn:i1}), we have
$$(\ln(\frac{g'}{f'}))'+7(\ln(\frac gf))'=p\frac{h^6}{fgf'g'}$$
and hence 
$$\frac1p(fgf'g''-f''fgg''+7(g')^2ff'-7(f')^2gg')=h^6.$$ 

Clearly, a pole $z_0$ of $h^6$ will either be a pole of $fgf'g''-f''fgg''+7(g')^2ff'-7(f')^2gg'$ or a zero of $p$. If $z_0$ is outside $Z(p)$, then $z_0$ is a pole of $fgf'g''-f''fgg''+7(g')^2ff'-7(f')^2gg'$ and hence a pole of at least one of the four terms $fgf'g'',f''fgg'',(g')^2ff',(f')^2gg'$. In fact, $z_0$ should be a pole for each of $fgf'g'',f''fgg'',(g')^2ff',(f')^2gg'$ as $f,g,h$ share a pole of order one when the pole is outside $Z(p)$ by Lemma 6.1(c). Hence we have $6N(r,h)=N(r,h^6) \le N(r,\frac1p)+N(r,fgf'g'')\le S(r) + N(r,f)+N(r,g)+N(r,f')+N(r,g'')$ as by the First Fundamental Theorem, we have $N(r,\frac1p) \le T(r,p)+ \mathcal{O}(1) = S(r)$. By Lemma 6.1(d), we then have 
$$6T(r,h) + S(r)\le T(r,f)+T(r,g)+T(r,f')+T(r,g'')+S(r).$$

From $T(r,f)+S(r)=T(r,g)+S(r)=T(r,h)+S(r)$ and the well-known fact that $T(r,w') \le (1+\varepsilon) T(r,w)$, it follows that
$$6T(r,h) +S(r) \le (4 + \varepsilon) T(r,h) + S(r),\, r \to \infty, r \notin E.$$
This is a contradiction and we conclude the proof of Theorem 1.2.
\end{proof}

\begin{proof} We now prove Theorem 1.1 ($n=6$) and let us consider entire solution $f,g$ and $h$ for $$f^6+g^6+h^6=1.$$
If $F^*M_{xyz}=0$, then we will have (4.8) for $n=6$ and we are done.
Now suppose $F^*M_{xyz} \neq 0$ which implies that all $f, g$ and $h$ are non-constant.

Note that $xyz\Phi_{xy}$ is a log jet differential with log-pole at the hyperplane at $\infty$, which is the same as Lemma 4.4 after a change of coordinates.  Proposition 5.1(a) in this case implies that 
$p:=xyz\Phi_{x,y}$ is a small function, i.e. $T(r,p)= \mathcal{O}(\log(T(r,F)))=S(r)$. 
 
Now following the argument used to get (4.8), we have
$$(\ln(\frac{g'}{f'}))'=-5(\ln(\frac gf))'+p\frac{h^6}{fgf'g'}.$$

Note that, from p.84 of \cite{I02}, for $n=6$, we again have  $$T(r,f)+S(r)=T(r,g)+S(r)=T(r,h)+S(r),$$
where $r \to \infty, r \notin E$. 

Let $a = \Big(\ln \big(\frac{g'}{f'}\big) \Big)'$ and $b = 5 \Big(\ln \big(\frac{g}{f}\big) \Big)'$. 
Hence, by the Logarithmic Derivative Lemma and the fact that $T(r,w') \le (1+\varepsilon) T(r,w)$, we have
\begin{align*}
m(r,a) & = o\Big( T \big(r, \frac{g'}{f'}\big) \Big)
 = o\big( T (r, g') + T(r,f')) + O(1)\big)\\
& = o\big( (1+\varepsilon) T(r, g) +
(1+\varepsilon') T(r,f) + O(1)\big)\\
& = S(r), r \to \infty, r \notin E;
\end{align*}

\begin{align*}
m(r,b) & = o\Big( T \big(r, \frac{g}{f}\big) \Big)
 = o\big( T (r, g) + T(r,f)) + O(1)\big)\\
& = S(r), r \to \infty, r \notin E.
\end{align*}

Since $h^6=\frac{a+b}p \, fgf'g'$ and $T(r,p)=S(r)$, we have
\begin{align*}
6m(r,h) +S(r) & = m(r,h^6) + S(r) \\
& \le m \Big(r, \frac{a+b}p \Big) 
+ m(r,f) + m(r,f') + m(r,g) + m(r,g') + S(r)\\
& = (4 + \varepsilon) T(r,h) + S(r),\, r \to \infty, r \notin E.
\end{align*}

Since $h$ is entire, we have $T(r,h)=m(r,h)$ and hence 
$$6T(r,h) +S(r) \le (4 + \varepsilon) T(r,h) + S(r),\, r \to \infty, r \notin E.$$
This is a contradiction and we conclude the proof of Theorem 1.1.

\end{proof}


\begin{remark}
Ishizaki \cite{I02} showed that meromorphic solutions for the $n=8$ case would satisfy differential equations of the form $W(f^8,g^8,h^8)=a(z)(f(z)g(z)h(z))^6$ where $W(f_1,f_2,f_3)$ is the Wronskian determinant of $f_1,f_2,f_3$. He also showed that entire solutions for the $n=6$ case satisfy differential equations of the form $W(f^6,g^6,h^6)=b(z)(f(z)g(z)h(z))^4$.
One can check that $(\ln(\frac{g'}{f'}))'+7(\ln(\frac gf))'=p\frac{h^6}{fgf'g'}$ is the same as $W(f^8,g^8,h^8)=a(z)(f(z)g(z)h(z))^6$ where $p=\frac{a}{64}$. Similarly, $(\ln(\frac{g'}{f'}))'+5(\ln(\frac gf))'=p\frac{h^6}{fgf'g'}$ is the same as $W(f^6,g^6,h^6)=b(z)(f(z)g(z)h(z))^4$, where $p=\frac{b}{36}$. From Proposition 5.1 and 6.1 of Ishizaki's paper \cite{I02}, we know that $a=S(r)$ and $b=S(r)$. Hence $p=S(r)$ when $n=8$ and $n=6$.
\end{remark}

\section{Generalized Fermat functional equations}

In this section, we will explain briefly how the method of jet differentials can be applied to study the meromorphic solutions of the generalized Fermat functional equations:
\begin{equation}
f^n+g^m+h^l=1, \label{eqn:g}
\end{equation} where $n\geqslant m\geqslant l$.\\

Note that Hayman's Theorem is equivalent to saying that for $n=m=l$, there is no non-trivial meromorphic solution of (\ref{eqn:g}) if
$\frac1n +\frac1m + \frac1l < \frac38$. In 2003, Hu, Li and Yang \cite{HuLY03} showed that there is no non-trivial meromorphic solution of (\ref{eqn:g}) if
$\frac1n +\frac1m + \frac1l < \frac13$. The condition is improved to $\frac1n +\frac1m + \frac1l \le  \frac13$ in \cite{YZ10} and then to 
$\frac1n +\frac1m + \frac1l  <  \frac{25}{72}=\frac19 +\frac19 + \frac18$ by Yi and Yang in 2011. In this section we shall prove the following strongest result so far.

\begin{theorem}
There is no non-trivial meromorphic solution of $f^n+g^m+h^l=1$ if
$$\frac1n +\frac1m + \frac1l  \le \frac 38$$
\end{theorem}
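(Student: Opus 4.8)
The plan is to carry the construction of Sections~4--6 over to the generalized equation by passing to the Wronskian. Writing $u=f^n$, $v=g^m$, $w=h^l$, the hypothesis becomes $u+v+w=1$, so $u'+v'+w'=0$ and $u''+v''+w''=0$, and the natural second-order object replacing $\Phi$ is the Wronskian $W(u,v,w)$; the two linear relations reduce it to the $2\times 2$ minor $W=v'w''-v''w'$ and its cyclic analogues. Factoring $W$ as in the proof of Lemma~4.1, but with the three operators $D^2_n x=d^2x+\frac{n-1}{x}(dx)^2$, $D^2_m y$, $D^2_l z$ replacing the single $D^2$, and substituting $u=f^n$, etc., yields
\[
W(f^n,g^m,h^l)=c\,p\,f^{\,n-2}g^{\,m-2}h^{\,l-2},
\]
where $c$ is a nonzero constant and $p=F^*\Phi_{n,m,l}$ for a suitable $2$-jet differential $\Phi_{n,m,l}$ on the weighted Fermat surface $X^n+Y^m+Z^l=W^N$. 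The entire strength of the theorem is concentrated in the assertion that $p$ is a \emph{small} function, and the pole-order computation at the coordinate hyperplanes and at infinity --- exactly the computation of Lemmas~4.2 and~4.4 --- is what converts the holomorphy (resp.\ log-pole) and ample-vanishing requirement on the normalized differential $x^{a}y^{b}z^{c}\Phi_{n,m,l}$ into the symmetric threshold $\frac1n+\frac1m+\frac1l\le\frac38$; for $n=m=l$ this is $n\ge 8$, matching Lemma~4.2, and the boundary is covered by the log-pole version together with Theorem~B.

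Granting this, I would first dispose of the degenerate alternative $F^*\Phi_{n,m,l}\equiv 0$: by the integral-curve and genus argument of Corollary~4.3 it leaves only the trivial solutions. In the principal case the proof of Proposition~5.1 applies verbatim --- the Logarithmic Derivative Lemma, the Calculus Lemma and the First Main Theorem, together with the fact that the normalized differential is holomorphic so that its pullback has vanishing counting function --- and gives $T(r,p)=\mathcal{O}(\log T(r,F))=S(r)$. Dividing the minor expansion of $W$ by $f^{n-2}g^{m-2}h^{l-2}$ then produces the differential equation
\[
\Big(\ln\tfrac{h'}{g'}\Big)'+(l-1)(\ln h)'-(m-1)(\ln g)'=\frac{c\,p\,f^{\,n-2}}{m\,l\,g\,g'\,h\,h'},
\]
the exact analogue of (\ref{eqn:i1}), together with its two cyclic companions.

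Next I would establish the structural package generalizing Lemma~6.1: off the zero set $Z(p)$ no two of $f,g,h$ vanish together, all their zeros are simple, all their poles are common and simple, and $m(r,1/f_i)=S(r)$ (the last following, as in the equal-exponent case, from $m(r,W/(uvw))=S(r)$ and the smallness of $p$). I would also record the balance relation $n\,T(r,f)=m\,T(r,g)=l\,T(r,h)+S(r)=:T^{*}$ forced by $u+v+w=1$. The theorem then follows by a counting argument of the same shape as in the proofs of Theorems~1.1 and~1.2: the factorization makes $W$ vanish to orders $n-2$, $m-2$, $l-2$ at the simple, pairwise disjoint zeros of $f,g,h$, giving a lower bound $N(r,1/W)\ge(n-2)\bar N(r,1/f)+(m-2)\bar N(r,1/g)+(l-2)\bar N(r,1/h)$, while $W=c\,p\,f^{n-2}g^{m-2}h^{l-2}$ with $p$ small bounds $T(r,W)$ from above through the pole contribution of $f^{n-2}g^{m-2}h^{l-2}$; inserting the balance relation and the simple-pole bound $\bar N(r,\infty)\le T^{*}/n$ collapses everything to a numerical inequality that is critical exactly at $\frac1n+\frac1m+\frac1l=\frac38$.

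The hardest point is precisely that $\frac1n+\frac1m+\frac1l=\frac38$ is the critical value, where the naive tally yields only equality; the boundary --- already the case $n=m=l=8$ of Theorem~1.2 --- must be pushed to a strict contradiction by the refined estimates used there, controlling $m(r,W)$, the proximity of the derivatives, and the exceptional set of finite $\frac{dr}{r}$-measure. A secondary difficulty is the very first step: performing the pole-order computation in the unequal-exponent, weighted-projective setting so that the \emph{symmetric} bound $\frac38$ falls out and so that Theorem~B, rather than Theorem~A, governs the equality case. Once the sharp smallness of $p$ is secured, the surviving work is bookkeeping of the type already carried out for Theorems~1.1 and~1.2, but now with three distinct exponents and the asymmetric balance relation.
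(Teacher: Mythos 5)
Your route is genuinely different from the paper's, and it has a real gap. The paper does \emph{not} prove the symmetric threshold $\frac1n+\frac1m+\frac1l\le\frac38$ by a uniform Nevanlinna count. It quotes the result of Yi and Yang that there is no non-trivial solution when $\frac1n+\frac1m+\frac1l<\frac{25}{72}$, observes that the only remaining triples with $n\ge m\ge l$ in the window $[\tfrac{25}{72},\tfrac{27}{72}]$ are $(9,9,8)$, $(9,8,8)$ and $(8,8,8)$, disposes of $(8,8,8)$ by Theorem 1.2, and handles the other two by Proposition 7.2: one checks (via Heijne's classification) that the associated surface is smooth, respectively has only isolated ADE singularities and hence is normal, so that $xyz\Phi$ extends to a holomorphic $2$-jet differential vanishing on an ample divisor; Theorem A then forces $F^*\Phi=0$, the ODE integrates to $g^m=c_1f^n+c_2$, and the genus argument of Corollary 4.3 finishes. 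So the number $\frac38$ enters only because $\frac{25}{72}$ happens to be two seventy-seconds below $\frac{27}{72}$, leaving finitely many boundary cases.

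The concrete problems with your sketch are these. First, your claimed source of the constant $\frac38$ is wrong: the holomorphy and ample-vanishing of the normalized jet differential are governed by the pole order at the hyperplane at infinity, and as in Lemma 4.2 and Proposition 7.2 that computation produces a condition on the \emph{largest} exponent ($n\ge8$, resp.\ $n\ge9$), not the symmetric condition $\frac1n+\frac1m+\frac1l\le\frac38$. Worse, for genuinely unequal exponents the projective closure $X^n+Y^mW^{n-m}+Z^lW^{n-l}+W^n=0$ is in general badly singular along $W=0$ (not isolated, not normal), which is precisely why the paper restricts Proposition 7.2 to normal surfaces with isolated ADE singularities and verifies this case by case from Heijne's tables; your uniform argument would have to work on, say, $x^{100}+y^7+z^7=1$, where the smallness of $p$ is not established by anything in Sections 4--5. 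Second, the decisive step --- the ``numerical inequality that is critical exactly at $\frac38$'' obtained from $N(r,1/W)$, the balance relation and the pole bound --- is asserted but never derived, and you yourself note that at equality the naive tally gives no contradiction; since the entire content of the theorem sits in that inequality and in the boundary case, the proof is not complete as proposed. If you want to salvage your approach, you should either carry out that count explicitly for all admissible $(n,m,l)$ (which would be a genuinely stronger, self-contained result than the paper's) or, as the paper does, invoke the known $<\frac{25}{72}$ bound and reduce to the finitely many remaining triples.
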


By modifying Green's example \cite{Green75} mentioned in Section 1, we have $f^4+g^4+h^n=1$ where 
$f = 8^{-1/4} (e^{3 z} + e^{-z}), \,g = (-8)^{-1/4} (e^{3 z} - e^{- z}), \, h = (-1)^{1/n} e^{\frac{8}{n}z}$. So the constant $\frac38$ cannot be replaced by a number greater than $\frac 12$. Note that the surface associated with $f^4+g^4+h^n=1$ is not smooth. As we will see, one needs to pay special attention to the type of singularities a surface can have if one wants to apply the method of jet differentials. \\ 

To prove Theorem 7.1, it would be more convenient to consider 
\begin{equation} 
f^n+g^m+h^l+1=0, \label{eqn:g1}
\end{equation} where $n\geqslant m\geqslant l$.\\

We then have the following 

\begin{proposition}
Assume that a generalized Fermat surface $S$ defined by equation (\ref{eqn:g1}) is a normal surface with isolated singularities.  
Assume that $n\geqslant 9$.  Then any entire holomorphic curve on $S$ lies on an algebraic curve.
\end{proposition}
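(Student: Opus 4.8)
The plan is to reduce Proposition 7.2 to the smooth situation of Proposition 3.2 by passing to a resolution, and then run the jet-differential argument there. First I would homogenize (\ref{eqn:g1}) inside a weighted projective space $\mathbb{P}(a_0:a_1:a_2:a_3)$ whose weights are chosen so that $X^n, Y^m, Z^l$ and the homogenization of the constant term all acquire a common quasi-homogeneous degree, obtaining the projective generalized Fermat surface $\bar S$ having $S$ as its affine part. Since by hypothesis $\bar S$ is normal with isolated singularities, I would take a resolution $\pi:\tilde S\to\bar S$ and, given an entire holomorphic curve $C:\mathbb{C}\to S$, lift it to $\tilde C:\mathbb{C}\to\tilde S$. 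If $C$ already lies in the singular locus or in an exceptional fibre the conclusion is immediate, so I may assume $\tilde C$ is the strict transform of $C$.

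The next step is to establish, for $n\geqslant 9$, the analogue of Lemma 3.1 on $\tilde S$: that $\tilde S$ is of general type and that, for $k,m$ sufficiently large, $H^0(\tilde S, F_{k,m})$ contains a non-zero holomorphic jet differential $\eta$ vanishing on an ample divisor. I would compute $K_{\tilde S}$ by combining adjunction on the weighted projective space (so that $K_{\bar S}$ is governed by a quantity positive once $n$ is large) with the discrepancies of $\pi$; mildness of the singularities keeps these discrepancies controlled and preserves bigness of $K_{\tilde S}$. With $\tilde S$ of general type and its tangent bundle semistable, the Riemann--Roch plus Bogomolov-vanishing computation of Lemma 3.1(c) again forces $h^0(\tilde S, F_{k,m})$ to grow like $\chi(\tilde S, F_{k,m})$, producing the desired $\eta$.

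With such an $\eta$ in hand, Theorem A yields $\tilde C^{*}\eta=0$, so $\tilde C$, and hence $C$, satisfies an algebraic ordinary differential equation on $\tilde S$. To upgrade ``integral curve of an ODE'' to ``algebraic curve,'' I would, as in the proof of Corollary 4.3, work with the explicit $2$-jet differential built from (\ref{eqn:g1}) by differentiating twice and applying Cramer's rule after the substitution $u=f^n$, $v=g^m$, $w=h^l$; the resulting first-order relation integrates to an algebraic first integral generalizing $y^n=k_1x^n+k_2$, confining the image of $C$ to the intersection of $\bar S$ with an explicit algebraic hypersurface, that is, to an algebraic curve.

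The hard part will be the singularity bookkeeping flagged in the Remark preceding Theorem 7.1. I must verify that the isolated singularities of $\bar S$ are mild enough (for instance canonical or log-canonical) that (i) the resolution $\tilde S$ remains of general type and the Bogomolov vanishing of $H^2(\tilde S, F_{k,m})$ survives, and (ii) the special jet differential extends holomorphically across the exceptional locus while keeping its zero along an ample divisor --- precisely the point at which the weighted degrees, and hence the inequality constraining $n,m,l$, enter. Lifting $\tilde C$ and controlling its behaviour near the finitely many singular points is the remaining technical step, handled by the normality hypothesis together with the removable-singularity behaviour of an entire curve meeting a discrete set.
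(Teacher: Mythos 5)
Your third paragraph is, at its core, the paper's actual proof: differentiate (\ref{eqn:g1}) twice, apply Cramer's rule to obtain the explicit $2$-jet differential $\Phi$ with
\[
\Phi=\frac{dy\,D^2z-dz\,D^2y}{nx^{n-1}}=\frac{dz\,D^2x-dx\,D^2z}{my^{m-1}}=\frac{dx\,D^2y-dy\,D^2x}{lz^{l-1}},
\]
where $D^2x=d^2x+\tfrac{n-1}{x}(dx)^2$ and similarly for $y,z$ with $m,l$; invoke Theorem A to get $F^*(xyz\Phi)=0$; and integrate the resulting first-order relation explicitly to $y^m=c_1x^n+c_2$, which is the algebraic curve. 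Most of the rest of your proposal is unnecessary or misdirected. The detour through a resolution-level analogue of Lemma 3.1 (general type, Riemann--Roch, Bogomolov vanishing) buys nothing here: as Remark 3.3 already stresses, an abstractly produced jet differential only shows the curve is an integral curve of some inexplicit ODE, and you rightly abandon that route --- so your second paragraph can be deleted. The weighted projective compactification is also not what is needed; the paper takes the closure in ordinary $\mathbb{P}^3$, where the hyperplane $W=0$ restricts to an ample divisor.

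Two concrete problems remain. First, the singularity bookkeeping is much lighter than the canonical/log-canonical discrepancy analysis you propose: since the singularities $R_i$ are isolated and $S$ is normal, the coefficients of the explicit $xyz\Phi$, being holomorphic and bounded near each $R_i$, extend across $R_i$ by normality; one then pulls the extended differential back to any resolution $p:\widehat S\rightarrow S$. No control of $K_{\widehat S}$, no vanishing of $H^2$, and no general-type statement for $\widehat S$ is required, because the section is already exhibited explicitly. Second --- and this is the genuine gap --- you never verify the step that makes Theorem A applicable, namely the pole count at infinity. Under $x=1/w$, $y=u/w$, $z=v/w$, the numerator of $xyz\Phi$ (reading off the first fraction above, whose denominator is $nx^{n-1}$) has pole order $7$ at $w=0$, while $x^{n-1}$ contributes a zero of order $n-1$, so $xyz\Phi$ has pole order $8-n$ at infinity: it is holomorphic for $n\geqslant 8$ and vanishes along the ample divisor $\{W=0\}$ exactly when $n\geqslant 9$. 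This is where the hypothesis $n\geqslant 9$ enters, and only the largest exponent matters. Without this computation your appeal to Theorem A, and hence the whole argument, is unsupported.
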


\begin{proof} (a).  Let us assume first that the surface $S$ is smooth.
By taking derivatives of equation (\ref{eqn:g1}), we obtain
\begin{eqnarray}
0&=&nx^{n-1}dx+my^{m-1}dy+lz^{l-1}dz\\
0&=&nx^{n-1}D^2x+my^{m-1}D^2y+lz^{l-1}D^2z
\end{eqnarray}
where 
$$D^2x=d^2x+\frac{n-1}x(dx)^2, \ D^2y=d^2y+\frac{m-1}y(dy)^2 , \ D^2z=d^2z+\frac{l-1}z(dz)^2 .$$

 It follows that 
\begin{equation}\label{eqn:g2} 
\frac{
\left|\begin{array}{cc}
d y&d z\\
D^2y&D^2z
\end{array}\right|}{nx^{n-1}}
=\frac{
\left|\begin{array}{cc}
d z&d x\\
D^2z&D^2x
\end{array}\right|}{my^{m-1}}
=\frac{
\left|\begin{array}{cc}
d x&d y\\
D^2x&D^2y
\end{array}\right|}{lz^{l-1}}
\end{equation}

Let $\Phi$ be the above expression.  From construction,
$xyz\Phi$ is holomorphic except perhaps at $\infty$.
Studying the first expression above after multiplying by $xyz$, we note that the numerator has pole order $7$ in $\infty$
as in the proof of Lemma 4.2.  It follows that if $n>8$, the expression $xyz\Phi$ would have no pole at a generic point of the divisor
at $\infty$ and in fact, vanish along this ample divisor.   Hence it has singularity at most along a codimension $2$ subset on $S$.
From Riemann Extension Theorem, we may extend $(xyz)\Phi$ as holomorphic jet differentials on $S$.\\

Let $F=[x,y,z,1]:\mathbb{C}\rightarrow S$ be an entire holomorphic curve.
Apply Theorem A, we have $F^*((xyz)\Phi)=0$. This implies that if $F(\mathbb{C})$ does not lie in a coordinate plane, we will have the differential
equation $F^*(\Phi)=0$.  Hence
\begin{eqnarray}
&&dx(d^2y+\frac{m-1}y(dy)^2)-dy(d^2x+\frac{n-1}x(dx)^2)=0\label{eqn:gb}\\
&\Rightarrow&d(\frac{dy^m}{dx^n})(dx^n)^2=0\nonumber\\
&\Rightarrow&d(y^m)=c_1{d(x^n)}\nonumber\\
&\Rightarrow&y^m=c_1{x^n}+c_2\nonumber
\end{eqnarray}
and  $F(\mathbb{C})$ lies on an algebraic curve.

\medskip
(b). Consider now the general case as stated in the statement of Proposition 7.2.    Let $\{R_i, i=1,\dots,r\}$ be the set of
normal singularities of $S$.  The construction above gives rise to a holomorphic two-jet differential on $S-\cup_{i=1}^r\{R_i\}$.
Let $D$ be the hyperplane $w=0$ of $\mathbb{P}^3$.  Suppose that $R_i$ lies in the affine part $S\cap \mathbb{C}^3=S\cap(\mathbb{P}^3-D)$. Then $xyz\Phi$ is holomorphic on
$S\cap(\mathbb{P}^3-D)-\cup_{i=1}^r\{R_i\}$.  Written in terms of local coordinates as in equation (\ref{eqn:gb}), the coefficients of $xyz\Phi$ is bounded in a neighborhood of $R_i$
and extends across $R_i$ since $S$ is normal.  Hence $xyz\Phi$ extends across $R_i$.   The same argument applies to $S\cap D$ in terms of a chart
around $D$ corresponding to a different standard affine charts on $\mathbb{P}^3$.  We conclude that $xyz\Phi$ extends across all the singular points.  By considering
a resolution of singularities $p:\widehat{S}\rightarrow S$ of $S$ and pulling back $xyz\Phi$, we obtain a holomorphic $2$-jet differential $p^*(xyz\Phi)$
vanishing along an ample divisor.   The rest of argument as in (a) allows us to conclude the proof.

\end{proof}
\medskip
\begin{corollary}
Entire holomorphic curve on the following surfaces are projective algebraic.\\

\noindent
a) $X^n+Y^n+Z^{n-1}W=W^{n}$ for $n\geqslant 9$\\
b) $X^n+Y^{n-1}W+Z^{n-1}W=W^n$ for $n\geqslant 9$.
\end{corollary}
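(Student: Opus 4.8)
The plan is to recognize both surfaces as projectivizations of generalized Fermat equations of the form (\ref{eqn:g}), to verify that each is normal with isolated singularities, and then to quote Proposition 7.2 directly, since the hypothesis $n\geqslant 9$ holds by assumption in both (a) and (b).

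First I would pass to the affine chart $W=1$, setting $x=X/W$, $y=Y/W$, $z=Z/W$. Surface (a), $X^n+Y^n+Z^{n-1}W=W^n$, becomes $x^n+y^n+z^{n-1}=1$, which is the generalized Fermat equation (\ref{eqn:g}) with exponents $(n,m,l)=(n,n,n-1)$; surface (b), $X^n+Y^{n-1}W+Z^{n-1}W=W^n$, becomes $x^n+y^{n-1}+z^{n-1}=1$, with $(n,m,l)=(n,n-1,n-1)$. Up to the sign of the constant term this is exactly the form (\ref{eqn:g1}), and that sign is immaterial since $\Phi$ is built by differentiating the defining equation and the constant drops out. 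In both cases the leading exponent is $n\geqslant 9>8$, which is precisely the numerical input that makes $xyz\Phi$ vanish along the hyperplane at infinity in the proof of Proposition 7.2.

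The substantive step is to check the normality hypothesis by computing the singular locus from the vanishing of the gradient of the defining form. For (a), with $P=X^n+Y^n+Z^{n-1}W-W^n$, the equations $\partial_X P=\partial_Y P=0$ force $X=Y=0$, and then $\partial_Z P=(n-1)Z^{n-2}W=0$ together with $\partial_W P=Z^{n-1}-nW^{n-1}=0$ admit no common solution other than the excluded point $[0:0:0:0]$; hence (a) is smooth and Proposition 7.2 applies immediately. For (b), with $Q=X^n+Y^{n-1}W+Z^{n-1}W-W^n$, one gets $X=0$; away from $\{W=0\}$ the equations $\partial_Y Q=\partial_Z Q=0$ give $Y=Z=0$ and then $\partial_W Q\neq 0$, so there are no affine singularities, while on $\{W=0\}$ the condition $\partial_W Q=Y^{n-1}+Z^{n-1}=0$ cuts out the finite set $\{[0:Y:Z:0]:Y^{n-1}+Z^{n-1}=0\}$ of $n-1$ points. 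Thus the singularities of (b) are isolated and lie at infinity. Since a hypersurface in $\mathbb{P}^3$ is a local complete intersection, hence Cohen--Macaulay and so satisfies Serre's condition $S_2$, and since an isolated singular locus has codimension $2$ in the surface (condition $R_1$), Serre's criterion shows that (b) is normal.

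Having verified that each surface is normal with isolated singularities and that $n\geqslant 9$, I would conclude by invoking Proposition 7.2 exactly as stated: $xyz\Phi$ is holomorphic off the singular points, extends across them by normality together with the Riemann extension theorem (the singular points of (b) being treated via the chart at infinity, as in part (b) of that proof), and vanishes along the ample hyperplane at infinity; Theorem A then forces $F^*(xyz\Phi)=0$ for every entire $F:\mathbb{C}\to S$, and integrating the resulting identity $F^*\Phi=0$ yields an algebraic relation $y^m=c_1x^n+c_2$ (with the degenerate possibility that $F(\mathbb{C})$ lies in a coordinate plane handled separately), so the image lies on an algebraic curve. The only real obstacle is the singular-locus computation for (b) and the verification that its singularities are genuinely isolated; once that is done, normality is automatic for a hypersurface and the remainder is a direct citation of Proposition 7.2.
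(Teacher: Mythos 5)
Your proof is correct and reaches the same conclusion, but it gets there by a genuinely different route on the one substantive point. The paper disposes of the singularity analysis by citation: case 76 of the appendix of Heijne's paper gives smoothness of (a), case 48 gives that (b) has only the ADE singularities $[0:\omega:1:0]$ with $\omega^{n-1}=-1$, and normality then follows because ADE (rational double point) singularities are normal. You instead compute the singular locus directly from the gradient of the defining form, finding that (a) is smooth and that (b) is singular exactly at the $n-1$ points $[0:Y:Z:0]$ with $Y^{n-1}+Z^{n-1}=0$ (which matches Heijne's list), and then you get normality from Serre's criterion via the chain ``hypersurface $\Rightarrow$ local complete intersection $\Rightarrow$ Cohen--Macaulay $\Rightarrow$ $S_2$,'' combined with $R_1$ from the isolatedness of the singular locus. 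This is a legitimate and arguably preferable argument: it is self-contained, it verifies precisely the hypotheses of Proposition 7.2 (normal with isolated singularities) without needing the finer classification of the singularity type, and the isolatedness of the singular locus even forces irreducibility and reducedness for free. What the paper's citation buys instead is brevity and a pointer to the general Delsarte framework that is reused in Proposition 7.4. Your remark that the sign of the constant term in passing from $x^n+y^m+z^l=1$ to the normalization $f^n+g^m+h^l+1=0$ is immaterial (the constant drops out upon differentiation, or can be absorbed by rescaling a variable) is a small point the paper leaves implicit, and it is handled correctly. I see no gap.
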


\medskip
\begin{proof}
According to case 76 of the appendix of \cite{Heijne16}, the surface given by (a) is smooth.
Moreover, from case 48 of the appendix of \cite{Heijne16}, the surface given by (b) is smooth except for ADE singularities $[0:\omega_n:1:0]$, where $\omega_n$ is a $(n-1)$-th root of $-1$. It is well-known that a surface with only ADE singularities is normal following Serre's criterion, cf. Appendix to \S3 of \cite{Reid87}, we can then apply Proposition 7.2 to conclude the proof.
\end{proof}
\bigskip

In general we may  consider a Delsarte surface as discussed in \cite{Shioda86} and \cite{Heijne16} because the generalized Fermat surface is a special type of Delsarte surfaces. 

\begin {proposition}  Let $S$ be a Delsarte surface with isolated ADE singularities and sufficiently large degree.  Then any entire holomorphic curve on
$S$ is projective algebraic.  In particular, this applies to all $83$ classes of surfaces tabulated in \cite{Heijne16} with $n\geqslant 9$.
\end{proposition}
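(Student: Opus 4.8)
The plan is to reduce the general Delsarte case to the Fermat case already handled in Lemma 4.2 and Corollary 4.3, using Shioda's description of a Delsarte surface as a quotient of a Fermat surface. Recall from \cite{Shioda86} and \cite{Heijne16} that if $S$ has exponent matrix $A$, then there is a dominant rational map $\pi\colon F_m\dashrightarrow S$ from the Fermat surface $F_m$ of degree $m=|\det A|$, exhibiting $S$ birationally as a quotient $F_m/G$ by a finite abelian group $G$ acting through diagonal automorphisms $(X,Y,Z,W)\mapsto(\zeta_0X,\zeta_1Y,\zeta_2Z,\zeta_3W)$ with $\zeta_j^m=1$. Since $m$ grows with the degree of $S$, ``sufficiently large degree'' guarantees $m\geqslant 9$; for the $83$ tabulated classes one reads off from \cite{Heijne16} that $n\geqslant 9$ already forces $m\geqslant 9$.

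First I would transport the jet differential of Lemma 4.2 from $F_m$ to $S$. For $m\geqslant 9$ the $2$-jet differential $xyz\Phi$ is holomorphic on $F_m$ and vanishes along the ample divisor at $\infty$. A direct computation in the affine coordinates $x=X/W$, etc., shows that under a diagonal automorphism $g\in G$ one has $g^*(xyz\Phi)=\chi(g)\,xyz\Phi$, where $\chi(g)=(\lambda_0\lambda_1\lambda_2)^2$ and $\lambda_j=\zeta_j/\zeta_3$ (each column of the determinant of Lemma 4.1 scales by one $\lambda_j$, and $xyz$ scales by $\lambda_0\lambda_1\lambda_2$). Since $\chi(g)^{|G|}=1$ for every $g$, the power $\eta:=(xyz\Phi)^{|G|}$ is a genuinely $G$-invariant holomorphic $2$-jet differential on $F_m$, still vanishing along an ample divisor. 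Being $G$-invariant, $\eta$ descends to a holomorphic $2$-jet differential $\bar\eta$ on the free locus of $S$; exactly as in the proof of Proposition 7.2, the normality of $S$ (its ADE singularities are normal by Serre's criterion, cf. \cite{Reid87}) lets the bounded coefficients of $\bar\eta$ extend across the singular points, and after passing to a resolution $p\colon\widehat S\to S$ we obtain a holomorphic $2$-jet differential $p^*\bar\eta$ vanishing along an ample divisor of $\widehat S$.

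Next I would run the Schwarz-lemma argument. Given an entire curve $f\colon\mathbb C\to S$, lift it to $\widehat S$ (the resolution is an isomorphism over the smooth locus and $f^{-1}(\mathrm{Sing}\,S)$ is discrete, so the lift extends by properness) and apply Theorem A to conclude $f^*\bar\eta=0$. It remains to upgrade this single differential equation to genuine algebraic degeneracy. Over the \'etale locus of $\pi$ I would lift $f$ locally to $F_m$; the local lift $\tilde f$ then satisfies $\tilde f^*(xyz\Phi)=0$, so the integration carried out in Corollary 4.3 applies verbatim and forces $\tilde f$ onto an algebraic curve of the form $y^m=c_1x^m+c_2$ (the degenerate cases, where the image lies in a coordinate hyperplane, are lower-dimensional and immediate). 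Pushing back down by the $G$-invariant map $\pi$ and continuing analytically, the Zariski closure of $f(\mathbb C)$ is forced into a single algebraic curve $\pi(\{y^m=c_1x^m+c_2\}\cap F_m)\subset S$, which is the desired conclusion.

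The main obstacle is precisely this last upgrade. Passing from $f^*\bar\eta=0$ to algebraic degeneracy cannot be done by lifting the entire curve globally through the branched cover $\pi$, since extracting $m$-th roots of the coordinate functions of $f$ is obstructed by their zeros, and one arbitrary algebraic ODE does not force degeneracy on its own. The two devices that make the argument go through are the $|G|$-th power trick, which produces a $G$-invariant jet differential \emph{without} invoking any positivity of $S$ itself --- thereby avoiding the semistability input of Lemma 3.1(c), which is special to Fermat surfaces --- and the explicit integrability of $xyz\Phi=0$ from Corollary 4.3, which is what actually integrates the descended ODE into an honest algebraic curve. One should also check that the ADE singularities lie away from the hyperplane at $\infty$ (or are absorbed in the affine chart by normality), so that the vanishing of $p^*\bar\eta$ on an ample divisor survives the resolution, exactly as in Proposition 7.2.
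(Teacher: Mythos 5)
Your route through Shioda's Fermat covering $\pi\colon F_m\dashrightarrow S=F_m/G$ is genuinely different from the paper's, which never leaves the Delsarte surface: the paper's proof is a two-line reduction to Proposition 7.2 --- since a surface with only ADE singularities is normal (Serre's criterion), one runs the construction of equation (7.6) directly on the four-monomial defining equation of $S$ itself (differentiate twice, apply Cramer's rule to get $\Phi$, check the pole order at infinity, extend $xyz\Phi$ across the isolated normal singular points by the Riemann extension theorem, resolve, apply Theorem A, and integrate the resulting ODE explicitly). The only extension problem the paper ever faces is across a finite set of points, where normality suffices.

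The genuine gap in your argument is the descent step. A $G$-invariant holomorphic jet differential on $F_m$ does \emph{not} automatically descend to a holomorphic jet differential on the quotient: the quotient map is branched along divisors (diagonal automorphisms of $F_m$ typically fix whole coordinate curves $\{X_j=0\}\cap F_m$ pointwise), and invariant differentials acquire poles along the branch divisor when pushed down --- already $(dz)^{\otimes 2}$, invariant under $z\mapsto -z$, descends to $\tfrac14 w^{-1}(dw)^{\otimes 2}$. Concretely, $M_{xyz}$ is built from $d\log x$ etc.\ and descends only as a log-pole jet differential along the coordinate divisors of $S$, while $(xyz)^{2|G|}$ descends to a monomial whose vanishing along those divisors you would have to compute and compare against the pole orders; in addition $\pi$ is only a rational map, so there are indeterminacy and exceptional divisors to control, and the image of the hyperplane at infinity of $F_m$ must be checked to be ample on $S$ for Theorem~A to apply. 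Your appeal to ``normality, exactly as in Proposition 7.2'' cannot close any of this, because Riemann/Hartogs extension on a normal surface only handles codimension-two sets, whereas all of these obstructions live in codimension one. None of this arises in the paper's direct construction, which is why the reduction to Proposition 7.2 is the efficient path; your final step (local lifts of the entire curve over the \'etale locus, integration upstairs, and $G$-invariance of $y^m=c_1x^m+c_2$ since $\lambda_j^m=1$) is essentially sound once a holomorphic jet differential on a resolution of $S$ vanishing on an ample divisor is actually in hand.
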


\begin{proof}  Again, as surfaces with only ADE singularities are normal, the result follows from Proposition 7.2.  The condition $n\geqslant 9$
makes sure that the construction as given in equation (\ref{eqn:gb}) leads to existence of a holomorphic $2$-jet differential vanishing along an ample divisor. 
\end{proof}

To prove Theorem 7.1, it remains to show that the condition $\frac1n +\frac1m + \frac1l  <  \frac{25}{72}=\frac19 +\frac19 + \frac18$ can be replaced by the condition $\frac1n +\frac1m + \frac1k  \le \frac83$.  Therefore, we only need to consider the cases $(n,m,l)=(9,9,8),(9,8,8)$ or $(8,8,8)$. The case $(8,8,8)$ has already been covered by Theorem 1.2. We learn from the proof of Corollary 7.3 (a) that the surface associated with the case $(9,9,8)$ is smooth and we can follow the proof of Proposition 7.1 to conclude that $g^m=c_1{f^n}+c_2$ which will lead to a contradiction as shown in the proof of Corollary 4.3. Finally, from the proof of Corollary 7.3 (b), the case $(9,8,8)$ will correspond to a surface with ADE singularities only and the surface will therefore be normal and we can again follow the proof of Proposition 7.1 to conclude that $g^m=c_1{f^n}+c_2$ which will again lead to a contradiction.

\section{Some related open problems}

From the proof of Corollary 4.5 and Theorem 1.1 for the entire solutions of the Fermat functional equation (2.2), we know that when $n \ge 6$, one can only have solutions of the form $(f(t),\omega_1f(t), \omega_2)$ or by permutation of the indices, where $\omega_1, \omega_2 \in \mathbb{C}$ such that $\omega_1^n=-1$ and $\omega_2^n=1$. We have the same conclusion for meromorphic solutions when $n \ge 8$. While there are non-trivial entire solutions when $n\le 5$, we only have the existence of non-trivial meromorphic solutions when $n \le 6$. It seems to us that the case $n=8$ for meromorphic solutions and $n=6$ for entire solutions are really the boundary cases and therefore we conjecture that there are non-trivial meromorphic solutions for $n=7$. \\   

For generalized Fermat functional equations, it would be interesting to know whether the number $\frac 38$ in the condition $\frac1n +\frac1m + \frac1l  \le \frac 38$ can be replaced by something bigger. 
In this paper, we only consider meromorphic functions for the generalized Fermat functional equations. It is natural to consider entire solutions as well. Note that Toda \cite{Toda71} has proved a general result which implies that if $f,g,h$ are non-constant entire functions satisfying $f^n+g^m+h^l=1$, then $\frac1n +\frac1m + \frac1l  \ge \frac 12$. The explicit entire solution for $f^4+g^4+h^n=1$ for any natural number $n$ mentioned in Section 7 shows that Toda's result is optimal.\\

In page 421 of \cite{Shioda86}, one can find, for a given Delsarte surface, a construction of a Fermat surface which is a covering of the given Delsarte surface. For example, for the Delsarte surface $X^n + Y^{n-1}W + z^{n-1}W + W^n =0$, the Fermat surface of degree $n(n-1)$, $S_{n(n-1)}$,  is its covering and the covering map is $\varphi = (x^{n-1}, \dfrac{y^n} w , \dfrac{z^n} w , w^{n-1})$.
In fact, one can check on $S_{n(n-1)}$,
$$\varphi^n_1 + \varphi_4\varphi^{n-1}_2 + \varphi_4\varphi_3^{n-1} + \varphi_4^n = x^{n(n-1)} + w^{n-1} \cdot \dfrac{y^{(n-1)^n}}{w^{n-1}} + w^{n-1} \cdot \dfrac{z^{(n-1)^n}}{w^{n-1}} 
   + w^{(n-1)n}=0. $$

So the image of $\varphi$ is lying on the Delsarte surface $X^n + y^{n-1}W + z^{n-1}W + W^n =0$.
For $n=3$, we have $S_{6}$ and by the covering map $\varphi$, if we have a non-trivial solution for $f^6+g^6+h^6=1$, then we can get a non-trivial solution for $x^3+y^2+z^2=1$ (which is known to have meromorphic or entire solutions). Gundersen's example for $f^6+g^6+h^6=1$ in \cite{Gu98} will then give a new solution for $x^3+y^2+z^2=1$. Therefore, it is still interesting to look for new non-trivial meromorphic solutions for the Fermat functional equations when $n=6$.\\

 Of course one may also consider the four term Fermat-type equations. In fact, Hayman \cite{Ha85} has already considered the existence of solution for the following general Fermat-type functional equation in connection with his study of Waring's problem in function theory:
\begin{equation} \label{eqn:h1} 
f_1^n+f_2^n+...+f_k^n=1
\end{equation}

Note that in Hayman's work for the Waring's problem in function theory \cite{Ha85},  one only needs to consider the Waring-type functional equation:
 
\begin{equation} \label{eqn:h2} 
f_1^n(z)+f_2^n(z)+...+f_k^n(z)=z
\end{equation}

It would be also interesting to see how the method of jet differentials can be applied to study these more general equations. One can refer to \cite{Ha95}, \cite{Ha14} and \cite{GuHa04} for some known results of (\ref{eqn:h1}) and (\ref{eqn:h2}).


\medskip

\affiliationone
{Tuen-Wai Ng\\
Mathematics Department\\
The University of Hong Kong\\
Pokfulam Road\\
Hong Kong, China}
\email{ntw@maths.hku.hk}\\

\affiliationone
{Sai-Kee Yeung\\ 
Mathematics Department\\
Purdue University\\
West Lafayette, IN  47907\\
USA}
\email{yeung@math.purdue.edu}
   
\end{document}